\def\mytitle{Over-approximating reachable tubes of linear time-varying systems}
\def\myname{%
\ifx\shortnames\undefined Mohamed \fi
Serry%
\ifx\shortnames\undefined{ and }\else{, }\fi
\ifx\shortnames\undefined Gunther \fi
Reissig
}
\def\FirstPageFootnotes{%
\thanks{This work has been supported by the German Research Foundation
(DFG) under grant no. RE 1249/4-1.%
\ifarxivPaper \submissionNote\fi%
}
\thanks{M.~Serry is with the University of Waterloo, Dept.~of
  Mechanical and Mechatronics Eng., Waterloo, Ontario,
  Canada, mserry@uwaterloo.ca.}
\thanks{G.~Reissig is with the Bundeswehr University Munich,
  Dept.~Aerospace Eng., Institute of Control Eng., D-85577
  Neubiberg (Munich), Germany, \url{http://www.reiszig.de/gunther/}.}%
}
\def\submissionNote{This work has been accepted for publication in the
\emph{IEEE Trans. Automatic Control}. Please refer to
\url{http://dx.doi.org/10.1109/TAC.2021.3057504}
for the definite publication.
}
\def\mykeywords{Reachability, linear time-varying systems%
\ifarxivPaper%
, MSC: Primary, 93B03; Secondary, 34A60
\fi%
}
\let\labelindent\@undefined\makeatother
\def\abs#1{\left|  #1 \right| }
\title{\mytitle}\author{\myname\FirstPageFootnotes}
\begin{document}
\maketitle
\begin{abstract}
We present a method to over-approximate reachable tubes over compact
time-intervals, for linear continuous-time, time-varying control systems whose initial
states and inputs are subject to compact convex uncertainty. The method uses numerical
approximations of transition matrices, is convergent of first order,
and assumes the ability to compute with compact convex sets in finite
dimension. We also present a variant that applies to the case of
zonotopic uncertainties, uses only linear algebraic operations, and
yields zonotopic over-approximations. The performance of the latter
variant is demonstrated on
an
example.
\end{abstract}

\begin{IEEEkeywords}
\noindent
\mykeywords
\end{IEEEkeywords}

\section{Introduction}
\label{s:intro}

Reachable (or \concept{attainable}) sets and tubes are central
concepts in systems and control theory, with myriads of
applications. See,
e.g.~\cite{BasileMarro92,BlanchiniMiani08,Tabuada09,%
AlthoffBakForetsFrehseKochdumperRaySchillingSchupp19,%
KurzhanskiVaraiya14,ReissigWeberRungger17,Althoff18%
}
and the references given therein.
The efficient computation of accurate approximations of these sets is
a challenging problem whose diverse variants have been attracting
research attention for decades.
In this paper, we focus on
over-approximating reachable tubes of
linear time-varying control systems of the form
\begin{equation}
\label{eq:LTVsystem}
\dot{x}(t) = A(t) x(t) + B(t) u(t)
\end{equation}
over compact time-intervals $\intcc{t_0,t_f}$,
where $A \colon \intcc{t_0,t_f} \to \mathbb{R}^{n \times n}$ and
$B \colon \intcc{t_0,t_f} \to \mathbb{R}^{n \times m}$ are
time-varying matrices.
Both the initial state $x(t_0)$ and the input signal $u$ are
subject to
compact convex uncertainty. The problem is mathematically formalized
in Section \ref{s:ProblemStatement}.

In numerous applications it is critical to formally verify that all
solutions of the system \ref{eq:LTVsystem} always avoid certain
predefined \concept{unsafe regions}; see,
e.g.~\cite[Sect.~3]{AlthoffBakForetsFrehseKochdumperRaySchillingSchupp19}
and the references given therein.
That is, these applications require proof that the reachable
tube over the time-interval $\intcc{t_0,t_f}$ (and not only the
reachable set at some time $t \in \intcc{t_0,t_f}$) of the system
\ref{eq:LTVsystem} does not intersect any unsafe region.
As tubes cannot, in general, be determined exactly,
intersection tests need to rely on over-approximations (and not on
mere approximations) in place of the actual tubes. The
over-approximations should be as precise as possible to avoid
excessive conservatism of the verification, and need to be represented
in a form that facilitates to reliably and efficiently verify
disjointness from unsafe regions.

One of the earliest techniques of reachability analysis, the
\concept{hyperplane method}, approximates reachable sets by
intersections of supporting halfspaces and by convex hulls of the
respective support points
\cite{PecsvaradiNarendra71,BasileMarro92}. More recent
techniques rely on a variety of additional classes of sets
including, e.g.~ellipsoids, hyper-intervals, and zonotopes
\cite{KurzhanskiVaraiya14,Althoff18,%
Girard05,GirardLeGuernicMaler06,LeGuernicGirard09,%
FrehseLeGuernicDonzeCottonRayLebeltelRipadoGirardDangMaler11,%
AlthoffFrehse16,AlthoffKroghStursberg11,BotchkarevTripakis00,%
VillanuevaQuirynenDiehlChachuatHouska17,SerryReissig18aC,%
ReissigWeberRungger17,ShenScott20,ZamaniPolaMazoTabuada10,%
NedialkovJacksonCorliss99,Veliov92,%
BeynRieger07,ReissigRungger19,BakDuggirala17,Baier95%
}.
As for reachable tubes, the standard approach today is to apply the
method proposed in \cite{Girard05} or one of its extensions,
e.g.~\cite{GirardLeGuernicMaler06,LeGuernicGirard09,Althoff18,%
FrehseLeGuernicDonzeCottonRayLebeltelRipadoGirardDangMaler11,%
AlthoffFrehse16}, which compute over-approximations in the form of
finite unions of zonotopes
\cite{Girard05,GirardLeGuernicMaler06,Althoff18}
and of more general convex sets
\cite{LeGuernicGirard09,%
FrehseLeGuernicDonzeCottonRayLebeltelRipadoGirardDangMaler11,%
AlthoffFrehse16}.
As a result of such representation, disjointness from a polyhedral (or
convex) unsafe region can be verified by solving a linear (or convex)
feasibility problem. While the method is particularly efficient
and converges, i.e., it is capable of producing arbitrarily precise
over-approximations, its application is limited to the time-invariant
special case of \ref{eq:LTVsystem}. Its extension in
\cite{AlthoffKroghStursberg11} additionally allows for uncertain
coefficients $A$ and $B$ in \ref{eq:LTVsystem}, but does not converge
even if $A$ and $B$ are precisely known, in which case
\ref{eq:LTVsystem} is again required to be time-invariant.

Another prominent class of methods, \concept{ellipsoidal techniques}
\cite{KurzhanskiVaraiya14}, solve the more general problem of feedback
synthesis for linear time-varying plants with two competing
inputs. When applied to the system \ref{eq:LTVsystem}, these methods
yield a set-valued function $E$ defined on the interval
$\intcc{t_0,t_f}$ whose value at any time is a finite intersection of
ellipsoids containing the reachable set at that time as a
subset. While arbitrarily precise over-approximations are obtained
when a sufficient amount of ellipsoids is computed, the approach
suffers from two shortcomings. Firstly, the ellipsoids result from
numerically solving linear-quadratic optimal control problems derived
from \ref{eq:LTVsystem}, yet numerical errors incurred in the course
of the solution are not taken into account. Hence, mere approximations
rather than over-approximations might actually be computed.
Secondly, approximations of reachable tubes are obtained only
implicitly, as the union over $t \in \intcc{t_0,t_f}$ of $E(t)$, and
so they are disjoint from an unsafe region $R$
if and only if the graph of the set-valued map $E$ is disjoint from
the set $\intcc{t_0,t_f} \times R$. Verifying the latter condition
is a great challenge since the graph of $E$ is not, in general,
convex. The issue has so far been resolved only for the time-invariant
special case of \ref{eq:LTVsystem}; see \cite{BotchkarevTripakis00}.
Moreover, while ellipsoidal techniques have been generalized to handle
nonlinear dynamics, the extensions still suffer from both the
aforementioned shortcomings,
e.g.~\cite{VillanuevaQuirynenDiehlChachuatHouska17}.

Other approaches use differential inequalities, comparison
principles, interval arithmetic, and combinations thereof, and compute
interval over-approximations
\cite{ShenScott20,ZamaniPolaMazoTabuada10,ReissigWeberRungger17,%
SerryReissig18aC,NedialkovJacksonCorliss99}.
While these techniques may allow for uncertain coefficients $A$ and
$B$ in \ref{eq:LTVsystem} \cite{SerryReissig18aC} or even for
nonlinear dynamics
\cite{ShenScott20,ZamaniPolaMazoTabuada10,ReissigWeberRungger17,%
NedialkovJacksonCorliss99}, they are all conservative, i.e.,
arbitrarily precise over-approximations of reachable tubes cannot be
obtained, and the methods in 
\cite{ShenScott20,ZamaniPolaMazoTabuada10,ReissigWeberRungger17,%
SerryReissig18aC}
additionally suffer from both shortcomings mentioned in our discussion
of ellipsoidal techniques.
Finally, the reachable tube can also be characterized as a sublevel
set of the viscosity solution of a partial differential equation
called \concept{Hamilton-Jacobi-Bellman equation}
\cite{KurzhanskiVaraiya14}. However, solving the latter numerically
is avoided in practice as this would require discretizing the state
space and so the computational effort would scale exponentially with
the state space dimension.

To conclude, efficient methods to compute arbitrarily precise
over-approximations of reachable tubes of the system
\ref{eq:LTVsystem}, that are additionally represented in a form
suitable for formal verification purposes, are currently limited to
the time-invariant special case of \ref{eq:LTVsystem}. This is in
stark contrast to the importance of the general time-varying case
of \ref{eq:LTVsystem} in several fields of application,
e.g.~\cite{FossenNijmeijer11}.

In Section \ref{ss:MainResults:General} of this paper,
we present a method that produces over-approximations that are
convergent of first order,
does not require discretization of either the input
or the state space,
uses numerical approximations of transition
matrices rather than closed-form solutions, and assumes the ability to
compute with compact convex sets in finite dimension.
A variant that applies to the case of zonotopic uncertainties, uses
only linear algebraic operations, and yields zonotopic
over-approximations, is subsequently presented in Section
\ref{ss:MainResults:Zonotopes}.
In Section \ref{s:examples}, we demonstrate the performance of the
latter variant on an example.

\section{Preliminaries}
\label{s:prelims}

\subsection{Notation}
\label{ss:prelims:Notation}

Given two sets $A$ and $B$ and a positive integer $p$,
$B \setminus A$ and $A \times B$ denotes the relative complement of
the set $A$ in the set $B$, and the product of $A$ and $B$,
respectively, and $A^p = A \times \cdots \times A$ ($p$ factors).
$\mathbb{R}$, $\mathbb{R}_+$, $\mathbb{Z}$ and $\mathbb{Z}_{+}$ denote
the sets of real numbers, non-negative real numbers, integers and
non-negative integers, respectively, and
$\mathbb{N} = \mathbb{Z}_{+} \setminus \{ 0 \}$.
$\intcc{a,b}$, $\intoo{a,b}$,
$\intco{a,b}$, and $\intoc{a,b}$
denote closed, open and half-open, respectively,
intervals with end points $a$ and $b$,
e.g.~$\intco{0,\infty} = \mathbb{R}_{+}$.
$\intcc{a;b}$, $\intoo{a;b}$,
$\intco{a;b}$, and $\intoc{a;b}$ stand for discrete intervals,
e.g.~$\intcc{a;b} = \intcc{a,b} \cap \mathbb{Z}$,
$\intco{1;4} = \{ 1,2,3 \}$, and
$\intco{0;0} = \emptyset$.
Given any map $f \colon A \to B$, the image of a subset $C \subseteq
A$ under $f$ is denoted $f(C)$, $f(C) = \Set{ f(c) }{ c \in C }$.
We denote the identity map $X \to X \colon x \mapsto x$ by $\id$,
where the domain of definition $X$ will always be
clear form the context.

Arithmetic operations involving subsets of a linear space $X$ are
defined pointwise, e.g.
$\alpha M \defas \Set{ \alpha y }{ y \in M }$ and the Minkowski sum
$M + N \defas \Set{ y + z }{ y \in M, z \in N }$, if
$\alpha \in \mathbb{R}$ and $M, N \subseteq X$.
The convex hull of $M$ is denoted $\conv( M )$.
By $\| \cdot \|$ we denote any norm on $X$,
$\mathbb{B} \subseteq X$ is the closed unit ball w.r.t.~$\| \cdot \|$,
and the norm of a non-empty
subset $M \subseteq X$ is defined by
$\| M \| \defas \sup_{x \in M} \| x \|$.
The maximum norm
on $\mathbb{R}^n$ is denoted $\| \cdot \|_{\infty}$,
$\| x \|_{\infty} = \max \Set{ | x_i | }{ i \in \intcc{1;n} }$
for all $x \in \mathbb{R}^n$.
The Hausdorff distance $d_H$ is defined in the Appendix.

We say that a map is \concept{of class $C^k$} if it is continuous and
$k$ times continuously differentiable, $k \in \mathbb{Z}_+$.
Given a non-empty set $X \subseteq \mathbb{R}^n$ and a compact
interval $\intcc{a,b} \subseteq \mathbb{R}$, $X^{\intcc{a,b}}$ denotes
the set of all measurable maps $\intcc{a,b} \to X$. Integration is
always understood in the sense of Lebesgue.
Given norms on $\mathbb{R}^n$ and $\mathbb{R}^m$, the linear space
$\mathbb{R}^{n \times m}$ of $n \times m$ matrices is endowed with the
usual matrix norm,
$\| A \| = \sup_{\| x \| \le 1} \| A x \|$ for
$A \in \mathbb{R}^{n \times m}$.

We use the asymptotic notation $O(\cdot)$ in the usual way
\cite{deBruijn81}. In particular,
let $X \subseteq \mathbb{R}^n$,
$f \colon F \subseteq \mathbb{R} \times X \to \mathbb{R}_{+}$,
$g \colon G \subseteq \mathbb{R} \to \mathbb{R}_{+}$,
$H \colon F \times \mathbb{R}_{+} \to \mathbb{R}_{+}$ and
$a \in \mathbb{R}$ be such that
$a = \lim_{i \to \infty} s_i$ for some sequence
$(s_i,x_i)_{i \in \mathbb{N}}$ in $F$, and suppose that
$s \in G$ whenever $(s,x) \in F$.
Then $f(s,x) \le H(s,x,O(g(s)))$ as $s \to a$, uniformly in $x$, if
there exist $k \colon G \to \mathbb{R}_{+}$ and a
neighborhood $U \subseteq \mathbb{R}$ of $a$ such that
$k(s) = O(g(s))$ as $s \to a$ and
$f(s,x) \le H(s,x,k(s))$ whenever $(s,x) \in F \cap ( U \times X )$,
and similarly for $a \in \{ \infty, -\infty \}$.

\subsection{Linear Time-Varying Control Systems}
\label{ss:Systems}

\noindent
Given $u \colon \intcc{t_0,t_f} \to \mathbb{R}^m$, a map
$x \colon \intcc{t_0,t_f} \to \mathbb{R}^n$ is a \concept{solution} of
the system \ref{eq:LTVsystem} (generated by $u$) if $x$ is absolutely
continuous and
\ref{eq:LTVsystem} holds for (Lebesgue) almost every
$t \in \intcc{t_0,t_f}$.
We shall always assume that $A$ and $B$ are continuous and that $u$ is
integrable, which implies both existence and uniqueness of solutions
\cite{Lukes82}.
The \concept{general solution} of the system \ref{eq:LTVsystem} is the
map $\varphi$ defined by the requirement that for all
$p \in \mathbb{R}^n$, $s \in \intcc{t_0,t_f}$ and integrable
$u$, $\varphi(\cdot,s,p,u)$ is the
unique solution of \ref{eq:LTVsystem} defined on $\intcc{t_0,t_f}$ and
satisfying $\varphi(s,s,p,u) = p$.
The map $\varphi(t,s,\cdot,0)$, which is linear, is called the
\concept{transition matrix at $(t,s)$} of the system and is denoted
by $\phi(t,s)$.
The map
$\phi \colon \intcc{t_0,t_f} \times \intcc{t_0,t_f} \to \mathbb{R}^{n \times n}$
is of class $C^1$, and the identities
\[
\varphi(t,s,p,u)
=
\phi(t,s) p
+
\int_s^t
\phi(t,\tau) B(\tau) u(\tau)
d\tau,
\]
$\phi(s,s) = \id$, and
$\phi(t,s) \phi(s,T) = \phi(t,T)$
hold for all $s, t, T \in \intcc{t_0,t_f}$, all $p \in \mathbb{R}^n$,
and all integrable $u$; see, e.g.~\cite{Lukes82}.
Moreover, $D_1 \phi(t,s) = A(t) \phi(t,s)$ and
$D_2 \phi(s,t) = - \phi(s,t) A(t)$
hold for all $s, t \in \intcc{t_0,t_f}$,
where $D_i \phi$ denotes the partial derivative of
$\phi$ with respect to (w.r.t.) the $i$th argument. If $A$ is additionally of
class $C^k$, $k \ge 1$, then $\phi$ is of class $C^{k+1}$. Finally,
Gronwall's lemma implies
\begin{equation}
\label{e:ExpEstimates}
\| \phi(t,s) \| \le \e^{ |t-s| M }
\text{\ \ and\ \ }
\| \phi(t,s) - \id \| \le \e^{ |t-s| M } - 1
\end{equation}
for all $s,t \in \intcc{t_0,t_f}$, provided that
$\| A(t) \| \le M$ for all $t \in \intcc{t_0,t_f}$.

\subsection{Reachable Sets and Tubes}
\label{ss:ReachableSetsAndTubes}

Given non-empty, compact, convex subsets $X_0 \subseteq \mathbb{R}^n$
and $U \subseteq \mathbb{R}^m$, and $a, b, t \in \intcc{t_0,t_f}$
satisfying $a \le b$, the sets
\begin{align*}
\mathcal{R}(t)
&=
\Set{ \varphi(t, t_0, x_0, u) }{ x_0 \in X_0, u\in U^{\intcc{t_{0},t}}},\\
\mathcal{R}( \intcc{a,b} )
&=
\bigcup_{s \in \intcc{a,b}}\mathcal{R}(s)
\end{align*}
are the \concept{reachable set at time $t$} and the \concept{reachable
tube over the time interval $\intcc{a,b}$}, respectively, of the
system \ref{eq:LTVsystem}.
Both $\mathcal{R}(t)$ and $\mathcal{R}( \intcc{a,b} )$ are non-empty
and compact, and $\mathcal{R}(t)$ is additionally convex and is
conveniently written using a set-valued integral,
$
\mathcal{R}(t)
=
\phi(t,t_0) X_0
+ \int_{t_0}^{t} \phi(t,s) B(s) U ds.
$
See, e.g.~\cite{Hermes70}. Moreover, the well-known semi-group
property of reachable sets
\cite{Chernousko94}
yields the identity

\begin{equation}
\label{e:SemiGroupProperty}
\mathcal{R}(b)
=
\phi(b,a) \mathcal{R}(a)
+
\int_a^b \phi(b,s) B(s) U ds.
\end{equation}

\section{Problem Statement}
\label{s:ProblemStatement}

We consider the system \ref{eq:LTVsystem}, where both the initial
state $x(t_0) \in X_0$ and the input $u(t) \in U$ are subject to
uncertainty, represented by the set $X_0$ and $U$, respectively. We
assume the following.

\begin{enumerate}[label=(\boldmath$H_{\arabic*}$),leftmargin=0pt,itemindent=*,ref=($H_{\arabic*}$)]
\item
\label{i:ProblemStatement:First}
\label{i:ProblemStatement:Time}
$n \in \mathbb{N}$, $t_0, t_f \in \mathbb{R}$ and $t_0 < t_f$.
\item
\label{i:ProblemStatement:Uncertainty}
$X_0 \subseteq \mathbb{R}^n$ and $U \subseteq \mathbb{R}^m$
are non-empty, compact, and convex.
\item
\label{i:ProblemStatement:Matrices}
$A$ and $B$ are of class $C^1$, and
$\| A(t) \| \le M_A$,
$\|\dot{A}(t) \| \le M_{\dot{A}}$,
$\| B(t) \| \le M_B$,
and
$\|\dot{B}(t)\| \le M_{\dot{B}}$
for all $t \in \intcc{t_0,t_f}$,
where $M_A,M_{\dot{A}}, M_B, M_{\dot{B}} \in \mathbb{R}$ and
$M_A > 0$.
Here, $\dot{A}$ denotes the derivative of the map
$A \colon \intcc{t_0,t_f} \to \mathbb{R}^{n \times n}$, and similarly
for $\dot{B}$.
\item
\label{i:ProblemStatement:ApproxPHI}
Denote
$D = \Set{(t,s) \in \intcc{t_0,t_f} \times \intcc{t_0,t_f}}{t \ge s}$.
Then
\begin{align}
\label{i:ProblemStatement:ApproxPHI:LocalError}
\| \phi(t,s) - \widetilde{\phi}(t,s) \|
&\le
\theta (t-s)
\text{ for all $(t,s) \in D$},
\\
\label{i:ProblemStatement:ApproxPHI:LocalErrorOfOrder2}
\theta(h) &= O(h^2)
\text{ as $h \to 0$},
\end{align}
where
$\widetilde{\phi} \colon D \to \mathbb{R}^{n\times n}$
approximates the transition matrix $\phi$ of \ref{eq:LTVsystem} and
$\theta \colon \mathbb{R}_{+} \to \mathbb{R}_{+}$ is
monotonically increasing.
\label{i:ProblemStatement:Last}
\end{enumerate}

We note that \ref{i:ProblemStatement:ApproxPHI} is the requirement
that the approximation $\widetilde{\phi}$ of $\phi$ has consistency
order $1$
\cite[Def.~4.7]{DeuflhardBornemann94Vol2ENGLISH}.
Under assumptions
\ref{i:ProblemStatement:First}-\ref{i:ProblemStatement:Matrices},
this requirement is satisfied by the vast majority of numerical
methods to solve initial value problems. See,
e.g.~\cite[Example 4.8]{DeuflhardBornemann94Vol2ENGLISH},
as well as Lemma \ref{lem:SecondOrderTaylorsMethod} in the Appendix.

The problem data $t_0$, $t_f$, $A$, $B$, $X_0$ and $U$ are fixed
throughout the paper, and so are the constants $M_A$, $M_{\dot{A}}$,
$M_B$, and $M_{\dot{B}}$, as well as the functions
$\phi$, $\widetilde{\phi}$ and $\theta$ and the set $D$.
Throughout the paper, all that data is subject to the standing
hypotheses
\ref{i:ProblemStatement:First}-\ref{i:ProblemStatement:Last}.

\begin{problem}
\label{problem:definition}
Devise a convergent method that over-approximates
$\mathcal{R}(\intcc{t_0,t_f})$, in the sense that given the problem
data and a time discretization parameter $N$,
a superset $\widehat R_N$ of $\mathcal{R}(\intcc{t_0,t_f})$ is
obtained, satisfying $\widehat R_N \to \mathcal{R}(\intcc{t_0,t_f})$
in Hausdorff distance as $N \to \infty$.
\end{problem}

\section{Proposed method}
\label{s:MainResults}

In order to solve Problem \ref{problem:definition} for any given value
of the time discretization parameter $N$, we shall over-approximate
reachable sets $\mathcal{R}(t_i)$ and
reachable tubes $\mathcal{R}( \intcc{t_i, t_{i+1}} )$
of the control system \ref{eq:LTVsystem} for equidistant points of
time $t_i \in \intcc{t_0, t_f}$, $i \in \intcc{0;N}$.
The approximation will be convergent of first order
\cite{DeuflhardBornemann94Vol2ENGLISH},
meaning that
the Hausdorff distance between $\mathcal{R}(t_i)$ and its
approximation is of order $O(1/N)$, and similarly for
tubes.
The respective method of over-approximation,
presented in Section \ref{ss:MainResults:General},
applies to any uncertainty sets $X_0$ and $U$ satisfying Hypothesis
\ref{i:ProblemStatement:Uncertainty} and assumes the ability to
compute with compact convex sets in finite dimension.
Our algorithmic solution subsequently presented in Section
\ref{ss:MainResults:Zonotopes} applies to the case of zonotopic
uncertainties, uses only linear algebraic operations, and
involves an additional approximation step that yields
zonotopic over-approximations of reachable tubes
retaining first order accuracy.

\subsection{Over-approximation of Reachable Sets and Reachable Tubes}
\label{ss:MainResults:General}

We consider the system \ref{eq:LTVsystem} under our standing
hypotheses
\ref{i:ProblemStatement:First}-\ref{i:ProblemStatement:Last}.
Given a time discretization parameter $N \in \mathbb{N}$, we define
sequences
$( \Omega_i )_{i \in \intcc{0;N}}$ and
$( \Gamma_i )_{i \in \intcc{1;N}}$
of subsets $\Omega_i, \Gamma_i \subseteq \mathbb{R}^n$
by the following requirements for all $i \in \intcc{1;N}$.
\begin{subequations}
\label{ProposedMethod:General}
\begin{align}
\label{ProposedMethod:General:h}
h
&=
(t_f - t_0) / N
\text{\ \ and\ \ }
t_i = t_0 + i h,
\\
\label{ProposedMethod:General:omega0}
\Omega_0 &= X_0,
\\
\label{ProposedMethod:General:omegai}
\Omega_i
&=
\widetilde{\phi}(t_i,t_{i-1}) \Omega_{i-1}
+ h B(t_i) U
+ ( \alpha_h + \theta_h \| \Omega_{i-1} \| ) \mathbb{B},
\\
\label{ProposedMethod:General:gammai}
\Gamma_i
&=
\conv
\left(
\Omega_{i-1}
\cup
\left(
\Omega_i
+ ( \beta_h + \gamma_h \| \Omega_{i-1} \| ) \mathbb{B}
\right)
\right).
\end{align}
Here,
$\| \cdot \|$ denotes any norm
and the maps
$\alpha, \beta, \gamma \colon \mathbb{R}_{+} \to \mathbb{R}_{+}$ are
defined by
\begin{flalign}
\label{ProposedMethod:General:beta}
r(s)
&=
\exp(s M_A) -1 - s M_A,
&
\beta(s)
&=
s^2 M_{\dot{B}} \| U \|,
\\
\label{ProposedMethod:General:alphagamma}
\alpha(s)
&=
r(s) \| U \| \frac{ M_{\dot{B}} + M_{A} M_{B} }{ M_{A}^2 },
&
\gamma(s)
&=
r(s) \left( 1 + \frac{ M_{\dot{A}} }{ M_{A}^{2} } \right)
\negthinspace.
\end{flalign}
\end{subequations}
For convenience, here and in the sequel we often use
$\alpha_h$ in place of $\alpha(h)$, and similarly for $\beta$,
$\gamma$ and $\theta$.

By \ref{ProposedMethod:General:h}, we define an equidistant grid with
step size $h$, of points $t_0$, \dots, $t_N$, spanning the
time interval $\intcc{t_0,t_f}$.
The requirements
\ref{ProposedMethod:General:omega0}-\ref{ProposedMethod:General:omegai}
iteratively define sets $\Omega_i$, which are supposed to approximate
the reachable sets $\mathcal{R}(t_i)$, and in turn,
\ref{ProposedMethod:General:gammai} uses these approximations as
well as their inflated versions to define sets $\Gamma_i$, which are
supposed to approximate reachable tubes $\mathcal{R}(\intcc{t_{i-1},t_i})$.
As we shall show, due to our careful definition of the maps $\alpha$,
$\beta$ and $\gamma$ depending on the time-varying problem data,
both $\Omega_i$ and $\Gamma_i$ actually are over-approximations, with
approximation error of order $O(1/N)$.

We now set out to state formally and to prove what we have just
described in informal terms. In doing so, we shall use the superscript
$N$ to indicate that, e.g.~the sequence
$( \Omega^N_i )_{i \in \intcc{0;N}}$has been computed by our method
\ref{ProposedMethod:General} for a specific value of the time
discretization parameter, and similarly for $h$, $t_i$ and $\Gamma_i$.

\begin{proposition}[Reachable Sets]
\label{prop:OverApproximatingReachableSets}
For each $N \in \mathbb{N}$, let sequences
$(t^N_i)_{i \in \intcc{0;N}}$ and $(\Omega^N_i)_{i \in \intcc{0;N}}$
be defined by \ref{ProposedMethod:General:h}-\ref{ProposedMethod:General:omegai}
and \ref{ProposedMethod:General:beta}-\ref{ProposedMethod:General:alphagamma}.
\\
Then 
$\mathcal{R}(t^N_i) \subseteq \Omega^N_i$
for all $N \in \mathbb{N}$ and all $i \in \intcc{0;N}$, and
$d_H( \mathcal{R}(t^N_i), \Omega^N_i ) \le O(1/N)$
as $N \to \infty$, uniformly w.r.t.~$i$.
\end{proposition}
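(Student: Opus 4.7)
My plan is to proceed by induction on $i \in \intcc{0;N}$. The base case $i = 0$ is immediate from \ref{ProposedMethod:General:omega0}, since $\mathcal{R}(t_0) = X_0 = \Omega_0$. For the inductive step I would invoke the semi-group identity \ref{e:SemiGroupProperty},
\[
\mathcal{R}(t^N_i)
=
\phi(t^N_i, t^N_{i-1}) \mathcal{R}(t^N_{i-1})
+ \int_{t^N_{i-1}}^{t^N_i} \phi(t^N_i, s) B(s) U \, ds,
\]
and bound the two resulting discretization errors separately. Using $\mathcal{R}(t^N_{i-1}) \subseteq \Omega^N_{i-1}$, the error from replacing $\phi$ by $\widetilde{\phi}$ is controlled pointwise via \ref{i:ProblemStatement:ApproxPHI:LocalError}, giving
\[
\phi(t^N_i, t^N_{i-1}) \Omega^N_{i-1}
\subseteq
\widetilde{\phi}(t^N_i, t^N_{i-1}) \Omega^N_{i-1} + \theta_h \|\Omega^N_{i-1}\| \mathbb{B}.
\]

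The heart of the argument is to establish
\[
\int_{t^N_{i-1}}^{t^N_i} \phi(t^N_i, s) B(s) U \, ds
\subseteq
h B(t^N_i) U + \alpha_h \mathbb{B}.
\]
Given any measurable $u \colon \intcc{t^N_{i-1}, t^N_i} \to U$, I would write the corresponding selector as $h B(t^N_i) \bar{u} + e$, where $\bar{u} = h^{-1} \int_{t^N_{i-1}}^{t^N_i} u(s)\,ds$ lies in $U$ by convexity of $U$, and $e = \int_{t^N_{i-1}}^{t^N_i} [g(s) - g(t^N_i)] u(s)\,ds$ with $g(\tau) = \phi(t^N_i, \tau) B(\tau)$. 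Using $D_2\phi(t^N_i,\tau) = -\phi(t^N_i,\tau) A(\tau)$, I get $g'(\tau) = \phi(t^N_i,\tau)(\dot B(\tau) - A(\tau)B(\tau))$, and \ref{e:ExpEstimates} then yields
\[
\|g(s) - g(t^N_i)\|
\le
(M_{\dot B} + M_A M_B) \cdot \frac{\mathrm e^{(t^N_i - s) M_A} - 1}{M_A}.
\]
A further integration over $\intcc{t^N_{i-1}, t^N_i}$ produces exactly the factor $r(h)/M_A^2$, hence $\|e\| \le \|U\|\,r(h)(M_{\dot B}+M_A M_B)/M_A^2 = \alpha_h$. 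Combining with the first bound closes the induction $\mathcal{R}(t^N_i) \subseteq \Omega^N_i$.

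For the Hausdorff estimate I would set $\epsilon_i = d_H(\mathcal{R}(t^N_i), \Omega^N_i)$; thanks to the inclusion just proved it suffices to control $\sup_{y \in \Omega^N_i} d(y, \mathcal{R}(t^N_i))$. Writing an arbitrary $y \in \Omega^N_i$ as $\widetilde{\phi}(t^N_i,t^N_{i-1}) z + h B(t^N_i) v + w$ with $z \in \Omega^N_{i-1}$, $v \in U$ and $\|w\| \le \alpha_h + \theta_h \|\Omega^N_{i-1}\|$, I would pick $z' \in \mathcal{R}(t^N_{i-1})$ with $\|z - z'\| \le \epsilon_{i-1}$ (inductive hypothesis) and form the competitor $y' = \phi(t^N_i,t^N_{i-1}) z' + \int_{t^N_{i-1}}^{t^N_i} \phi(t^N_i, s) B(s) v\,ds \in \mathcal{R}(t^N_i)$ (constant control $v$). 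Splitting $y-y'$ with the triangle inequality and reusing both previous bounds produces the recursion $\epsilon_i \le \mathrm e^{h M_A} \epsilon_{i-1} + 2\theta_h \|\Omega^N_{i-1}\| + 2 \alpha_h$. A preliminary norm-iteration of \ref{ProposedMethod:General:omegai} yields a uniform bound $\|\Omega^N_i\| \le C$ independent of $i,N$. Since $\alpha_h = O(h^2)$ and, by \ref{i:ProblemStatement:ApproxPHI:LocalErrorOfOrder2}, $\theta_h = O(h^2)$, discrete Gronwall applied to the recursion with $\epsilon_0 = 0$ gives $\epsilon_i \le O(h^2) \cdot (\mathrm e^{(t_f-t_0)M_A}-1)/(hM_A) = O(1/N)$, uniformly in $i$.

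The delicate step will be the integral-to-midpoint bound: singling out $g(s) - g(t^N_i)$ (rather than estimating $\phi(t^N_i,s)B(s) - B(t^N_i)$ summand by summand) is what lets a single exponential estimate $\|g'(\tau)\| \le \mathrm e^{(t^N_i-\tau)M_A}(M_{\dot B}+M_A M_B)$ absorb both the time-variation of $B$ and the transition-matrix contribution, producing the sharp factor $r(h)/M_A^2 = O(h^2)$ that ultimately drives first-order convergence. Once this bound is in place and $\|\Omega^N_i\|$ is uniformly controlled, the induction and the Gronwall argument are routine.
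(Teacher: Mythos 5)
Your proposal is correct and follows essentially the same route as the paper: induction via the semi-group identity, the key bound $\int_{t_{i-1}}^{t_i}\phi(t_i,s)B(s)U\,ds \subseteq hB(t_i)U+\alpha_h\mathbb{B}$ obtained from the derivative identity for $s\mapsto\phi(t_i,s)B(s)$, uniform boundedness of $\|\Omega_i^N\|$, and a discrete Gronwall recursion for the Hausdorff error. The only difference is presentational --- you argue with explicit selectors and pointwise competitors where the paper invokes Filippov's lemma and its Hausdorff-distance calculus (Lemmas 2, 3 and A.2), which yields the same estimates.
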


\noindent
For our proof, we need the following auxiliary results.

\begin{lemma}
\label{lem:ApproximatingSetValuedIntegral}
We have the estimate
$d_H( I(a,b), J(a,b) ) \le \alpha( b - a )$
whenever $t_0 \le a \le b \le t_f$, where
$I(a,b) = \int_a^b \phi(b,s) B(s) U ds$,
$J(a,b) = (b-a) B(b) U$, and
$\alpha$ is defined in \ref{ProposedMethod:General:alphagamma}.
\end{lemma}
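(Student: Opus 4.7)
The plan is to express both $I(a,b)$ and $J(a,b)$ as set-valued integrals over $\intcc{a,b}$ with the same integration variable, and then bound their Hausdorff distance pointwise. Since $B(b)U$ does not depend on $s$, and $\phi(b,b) = \id$, we can write
\[
J(a,b) = \int_a^b B(b) U \, ds = \int_a^b \phi(b,b) B(b) U \, ds,
\qquad
I(a,b) = \int_a^b \phi(b,s) B(s) U \, ds.
\]
I would then invoke two standard facts about the Hausdorff distance: first, that for set-valued maps $F,G$ one has $d_H(\int_a^b F(s)\,ds, \int_a^b G(s)\,ds) \le \int_a^b d_H(F(s),G(s))\,ds$; and second, that for matrices $M,N$ and a compact set $U$, $d_H(MU, NU) \le \|M - N\|\cdot \|U\|$. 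Combining these yields
\[
d_H(I(a,b), J(a,b)) \;\le\; \|U\| \int_a^b \| \phi(b,s) B(s) - B(b) \| \, ds.
\]

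The core of the argument is then to bound the integrand. I would introduce $f(s) = \phi(b,s) B(s)$ and note that $f(b) = B(b)$, so that $\phi(b,s) B(s) - B(b) = f(s) - f(b) = -\int_s^b f'(\tau)\,d\tau$. Using $D_2 \phi(b,\tau) = -\phi(b,\tau) A(\tau)$ (from the preliminaries on transition matrices) gives $f'(\tau) = -\phi(b,\tau)A(\tau)B(\tau) + \phi(b,\tau)\dot B(\tau)$, and together with Hypothesis \ref{i:ProblemStatement:Matrices} and the estimate $\|\phi(b,\tau)\| \le e^{(b-\tau)M_A}$ from \ref{e:ExpEstimates} one obtains
\[
\| f'(\tau) \| \;\le\; e^{(b-\tau)M_A} ( M_A M_B + M_{\dot B} ).
\]
Integrating from $s$ to $b$ yields $\|\phi(b,s)B(s) - B(b)\| \le (M_A M_B + M_{\dot B}) \cdot (e^{(b-s)M_A} - 1)/M_A$.

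Finally I would integrate this bound over $s \in \intcc{a,b}$. A direct computation (substitute $u = b - s$) gives
\[
\int_a^b \frac{e^{(b-s)M_A} - 1}{M_A}\,ds
\;=\;
\frac{e^{(b-a)M_A} - 1 - (b-a)M_A}{M_A^2}
\;=\;
\frac{r(b-a)}{M_A^2},
\]
and multiplying by $\|U\|(M_A M_B + M_{\dot B})$ reproduces $\alpha(b-a)$ exactly as defined in \ref{ProposedMethod:General:alphagamma}. The only non-routine step is the appeal to the two Hausdorff-distance inequalities for set-valued integrals and for linear images of a compact set; these are standard but I would either reference a result in the Appendix or give a brief verification using the support function, $h_{MU}(p) = h_U(M^\top p)$, together with the fact that $d_H$ equals the supremum norm of the difference of support functions on the unit sphere.
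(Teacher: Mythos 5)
Your proposal is correct and follows essentially the same path as the paper's proof: the same reduction to $\|U\|\int_a^b\|\phi(b,s)B(s)-B(b)\|\,ds$, the same identity $\phi(b,s)B(s)-B(b)=\int_b^s\phi(b,z)(\dot B(z)-A(z)B(z))\,dz$ obtained by differentiating $s\mapsto\phi(b,s)B(s)$, and the same final bound matching $\alpha(b-a)$. The only difference is that the paper justifies the first inequality by citing Filippov's Lemma, whereas you verify it via the standard Hausdorff-distance facts for set-valued integrals and linear images; you also write out the final integration, which the paper leaves implicit.
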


\begin{proof}
Let $a, b \in \intcc{t_0,t_f}$, $a \le b$. The assumption
\ref{i:ProblemStatement:Uncertainty} on $U$ implies
$J(a,b) = \int_a^b B(b) U ds$, and using
Filippov's Lemma \cite{Hermes70}, we obtain
\begin{equation}
\label{lem:ApproximatingSetValuedIntegral:proof:1}
d_H ( I(a,b), J(a,b) )
\le
\| U \|
\int_a^b \| \phi(b,s) B(s) - B(b) \| ds.
\end{equation}
Next, using \ref{e:ExpEstimates} and the identity
\[
\phi(b,s) B(s) - B(b)
= \int_b^s \phi(b,z) ( \dot{B}(z) - A(z) B(z) ) dz
\]
for all $s \in \intcc{t_0,t_f}$, we see that the integrand in
\ref{lem:ApproximatingSetValuedIntegral:proof:1} is bounded by 
$
( M_{\dot{B}} + M_{A} M_{B} ) (\e^{(b-s)M_{A}} - 1) / M_A
$, which proves the lemma.
\end{proof}

\begin{lemma}
\label{lem:UniformBoundednessOfOverapproximations}
Let $a \in \mathbb{R}_{+}$, $b \in \mathbb{R}$,
$K \colon \mathbb{N} \to \mathbb{N}$, and for each
$N \in \mathbb{N}$, let $(x^N_i)_{i \in \intcc{0;K(N)}}$ be a sequence
in $\mathbb{R}_{+}$. Suppose that $K(N) = O(N^a)$,
$x^N_0 = O( N^{a+b} )$ and
$x^N_i \le ( 1 + O(N^{-a}) ) x^N_{i-1} + O( N^b )$ hold
as $N \to \infty$, uniformly w.r.t.~$i$.
\\
Then $x^N_i \le O( N^{a+b} )$ as $N \to \infty$, uniformly w.r.t.~$i$.
\end{lemma}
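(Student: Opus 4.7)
The plan is to prove the claim via a discrete Gronwall-type argument. First I would unpack the asymptotic hypotheses into explicit constants: there exist $C_1, C_2, C_3 \in \mathbb{R}_{+}$ and $N_0 \in \mathbb{N}$ such that for every $N \ge N_0$ one has $K(N) \le C_1 N^a$, $x^N_0 \le C_2 N^{a+b}$, and for every $i \in \intcc{1;K(N)}$ the inequality
\[
x^N_i \le \bigl(1 + C_3 N^{-a}\bigr) x^N_{i-1} + C_3 N^b
\]
holds. This step is routine but essential; the rest of the proof then consists of producing a single uniform constant $C$ with $x^N_i \le C N^{a+b}$ for all admissible $N$ and $i$.

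Next I would unfold the recurrence by induction on $i$. The standard computation gives
\[
x^N_i \le \bigl(1 + C_3 N^{-a}\bigr)^{i} x^N_0 + C_3 N^b \sum_{j=0}^{i-1} \bigl(1+C_3 N^{-a}\bigr)^{j},
\]
and the geometric sum equals $\bigl((1 + C_3 N^{-a})^{i} - 1\bigr)/(C_3 N^{-a})$. Using $i \le K(N) \le C_1 N^a$, the key a priori bound is
\[
\bigl(1 + C_3 N^{-a}\bigr)^{i} \le \bigl(1 + C_3 N^{-a}\bigr)^{C_1 N^a} \le \e^{C_1 C_3},
\]
where the last step uses $\ln(1+x)\le x$ for $x\ge 0$; this is the only estimate that really matters, and it holds uniformly in $i$.

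Substituting these bounds, the first term on the right-hand side is at most $\e^{C_1 C_3} C_2 N^{a+b}$, and the second term simplifies to $(\e^{C_1 C_3}-1)\, N^a \cdot N^b = O(N^{a+b})$. Summing, we obtain a constant $C$ independent of $i$ and of $N \ge N_0$ such that $x^N_i \le C N^{a+b}$, which is exactly the claimed bound.

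The main obstacle will not be the Gronwall computation itself, which is classical, but rather the bookkeeping required to guarantee that every estimate is uniform in $i$. In particular, one must ensure at the unpacking step that the asymptotic bound in the recurrence is applied with one common constant $C_3$ for all $i \in \intcc{1;K(N)}$ simultaneously (this is precisely what the hypothesis "uniformly w.r.t.~$i$" buys), and that $N_0$ is chosen large enough so that the geometric series manipulation and the exponential bound $(1+C_3 N^{-a})^{C_1 N^a} \le \e^{C_1 C_3}$ are valid on the whole tail $N \ge N_0$.
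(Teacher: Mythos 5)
Your proof is correct and follows essentially the same route as the paper: the paper likewise extracts uniform constants from the $O(\cdot)$ hypotheses, bounds $(1+q(N))^{i}\le\exp(q(N)K(N))$ (your estimate $(1+C_3N^{-a})^{C_1N^a}\le\exp(C_1C_3)$), and then invokes the variation-of-constants formula, which is exactly the unfolded recurrence you write out explicitly. No substantive difference.
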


\begin{proof}
By our hypotheses, there exist maps
$p,q,r \colon \mathbb{N} \to \mathbb{R}_{+}$
satisfying
$p(N) = O(N^{a+b})$,
$q(N) = O(N^{-a})$ and
$r(N) = O(N^b)$
as $N \to \infty$, and
\begin{equation}
\label{lem:UniformBoundednessOfOverapproximations:proof:1}
x^N_0 \le p(N)
\text{\ \ and\ \ }
x^N_i \le (1 + q(N) ) x^N_{i-1} + r(N)
\end{equation}
for all sufficiently large $N \in \mathbb{N}$ and all
$i \in \intcc{1;K(N)}$.
Define $f(N,i) = (1 + q(N) )^i$ for all $N \in \mathbb{N}$ and all
$i \in \intcc{0;K(N)}$, to arrive at
$f(N,i) \le \exp( q(N) K(N) )$. Then, by our assumptions on $q$ and
$K$, the map $f$ is bounded. In view of
\ref{lem:UniformBoundednessOfOverapproximations:proof:1} and the
variation-of-constants formula we conclude that
$x^N_i \le O( N^{a+b} )$ as claimed.
\end{proof}

\begin{proof}[Proof of Proposition \ref{prop:OverApproximatingReachableSets}]
For the sake of simplicity, throughout this proof we drop
the superscript $N$ from our notation. Let $h$ be defined
by \ref{ProposedMethod:General:h}.

The first claim holds for $i = 0$ and all $N \in \mathbb{N}$ as
$\mathcal{R}(t_0) = X_0 = \Omega_0$.
Assume that $\mathcal{R}(t_i) \subseteq \Omega_i$ holds for some
$N \in \mathbb{N}$ and some $i \in \intco{0;N}$. Then, using the
identity \ref{e:SemiGroupProperty} and Lemma
\ref{lem:ApproximatingSetValuedIntegral}
as well as Lemma \ref{lem:HausdorffDistance}\ref{lem:HausdorffDistance:5},
we obtain
$
\mathcal{R}(t_{i+1})
\subseteq
\phi( t_{i+1}, t_i ) \Omega_i + h B(t_{i+1}) U + \alpha( h ) \mathbb{B}
$. Moreover,
$
\phi( t_{i+1}, t_i) \Omega_i
\subseteq
\widetilde{\phi}( t_{i+1}, t_i) \Omega_i + \theta(h) \| \Omega_i \|
$
by the estimate \ref{i:ProblemStatement:ApproxPHI:LocalError} and
Lemma \ref{lem:HausdorffDistance}\ref{lem:HausdorffDistance:3}\ref{lem:HausdorffDistance:5}, and so
$\mathcal{R}(t_{i+1}) \subseteq \Omega_{i+1}$.

To prove the second claim, we use the triangle inequality, assumption
\ref{i:ProblemStatement:Matrices} and estimates \ref{e:ExpEstimates},
\ref{i:ProblemStatement:ApproxPHI:LocalError} and
\ref{i:ProblemStatement:ApproxPHI:LocalErrorOfOrder2} to obtain the
bound
$\| \widetilde{\phi}(t_i, t_{i-1}) \| \le 1 + O(1/N)$ as $N \to \infty$,
uniformly w.r.t.~$i$. In turn,
\ref{i:ProblemStatement:ApproxPHI:LocalErrorOfOrder2},
\ref{ProposedMethod:General:omegai}, \ref{i:ProblemStatement:Uncertainty} and
the fact that $\alpha(s) = O(s^2)$ as $s \to 0$ together imply
$\| \Omega_i \| \le ( 1 + O(1/N) ) \| \Omega_{i-1} \| + O(1/N)$, and
so $\| \Omega_i \| \le O(1)$ as $N \to \infty$, uniformly w.r.t.~$i$,
by Lemma \ref{lem:UniformBoundednessOfOverapproximations}.
It follows that
$
\alpha(h) + \theta(h) \| \Omega_{i-1} \|
\le
O(1/N^2)
$
and
$
d_H( \widetilde{\phi}(t_i,t_{i-1}) \Omega_{i-1}, \phi(t_i,t_{i-1}) \mathcal{R}(t_{i-1}) )
\le
(1 + O(1/N)) e_{i-1} + O(1/N^2)
$,
where $e_i = d_H( \mathcal{R}(t_i), \Omega_i )$.
Moreover,
$
d_H(h B(t_i)U,\int_{t_{i-1}}^{t_i} \phi(t_i,s) B(s) U ds)
\le
O(1/N^2)
$
by Lemma \ref{lem:ApproximatingSetValuedIntegral}, and
so $e_i \le (1+O(1/N)) e_{i-1} + O(1/N^2)$ as $N \to \infty$,
uniformly w.r.t.~$i$. Then $e_i \le O(1/N)$ by Lemma
\ref{lem:UniformBoundednessOfOverapproximations}, as claimed.
\end{proof}

\noindent
The following theorem, and its corollary immediately obtained
using Lemma
\ref{lem:HausdorffDistance}\ref{lem:HausdorffDistance:Union},
provide a first solution to Problem \ref{problem:definition}.

\begin{theorem}[Reachable Tubes]
\label{th:OverApproximatingReachableTubes}
For each $N \in \mathbb{N}$, let sequences
$(t^N_i)_{i \in \intcc{0;N}}$ and $(\Gamma^N_i)_{i \in \intcc{1;N}}$
be defined by \ref{ProposedMethod:General}.
\\
Then $\mathcal{R}(\intcc{t_{i-1},t_i}) \subseteq \Gamma^N_i$
for all $N \in \mathbb{N}$ and all $i \in \intcc{1;N}$, and
$d_H(\mathcal{R}(\intcc{t_{i-1},t_i}), \Gamma^N_i) \le O(1/N)$
as $N \to \infty$, uniformly w.r.t.~$i$.
\end{theorem}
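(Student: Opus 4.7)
The plan is to prove the inclusion trajectorywise on each subinterval $\intcc{t_{i-1},t_i}$ and then derive the Hausdorff-distance bound from that inclusion combined with Proposition \ref{prop:OverApproximatingReachableSets}. I drop the superscript $N$ throughout.

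For the inclusion, fix any trajectory $x(\cdot)=\varphi(\cdot,t_0,x_0,u)$ and any $t\in\intcc{t_{i-1},t_i}$, and set $\lambda=(t-t_{i-1})/h\in\intcc{0,1}$. From $x(t)=\phi(t,t_{i-1})x(t_{i-1})+\int_{t_{i-1}}^t\phi(t,s)B(s)u(s)\,ds$ and the analogous formula for $x(t_i)$, the key splitting is
\[
x(t)=(1-\lambda)x(t_{i-1})+\lambda x(t_i)+\delta_1(\lambda)+\delta_2(\lambda),
\]
with $\delta_1(\lambda)=[\phi(t,t_{i-1})-(1-\lambda)\id-\lambda\phi(t_i,t_{i-1})]x(t_{i-1})$ and $\delta_2(\lambda)$ the corresponding difference of the two forced-response integrals. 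Both errors vanish at $\lambda\in\{0,1\}$. Writing $\phi(\cdot,t_{i-1})-\id$ as an integral of $F:=A\cdot\phi(\cdot,t_{i-1})$ reduces $\delta_1$ to $\int_0^{\lambda h}[F(u)-F(0)]\,du-\lambda\int_0^h[F(u)-F(0)]\,du$ applied to $x(t_{i-1})$, and Hypothesis \ref{i:ProblemStatement:Matrices} combined with \ref{e:ExpEstimates} then bounds this by $\lambda\gamma_h\|x(t_{i-1})\|$. An analogous manipulation of $\delta_2$, using $\dot B$-regularity and $\|u(\cdot)\|\le\|U\|$, yields $\|\delta_2(\lambda)\|\le\lambda\beta_h$. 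Proposition \ref{prop:OverApproximatingReachableSets} places $x(t_{i-1})\in\Omega_{i-1}$ and $x(t_i)\in\Omega_i$, so that with $\|x(t_{i-1})\|\le\|\Omega_{i-1}\|$ the decomposition $x(t)=(1-\lambda)x(t_{i-1})+\lambda(x(t_i)+(\delta_1+\delta_2)/\lambda)$ (for $\lambda>0$) puts $x(t)$ into $\Gamma_i$; the case $\lambda=0$ is immediate from $\Omega_{i-1}\subseteq\Gamma_i$.

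For the Hausdorff-distance bound, the inclusion just established handles one side; for the other, note $\Gamma_i\subseteq\conv(\Omega_{i-1}\cup\Omega_i)+(\beta_h+\gamma_h\|\Omega_{i-1}\|)\mathbb{B}$, with the inflation of order $O(1/N^2)$. Proposition \ref{prop:OverApproximatingReachableSets} together with the standard convex-hull property of $d_H$ (cf.\ Lemma \ref{lem:HausdorffDistance}) puts $\conv(\Omega_{i-1}\cup\Omega_i)$ within $O(1/N)$ of $\conv(\mathcal{R}(t_{i-1})\cup\mathcal{R}(t_i))$, uniformly in $i$. Any $(1-\mu)y_1+\mu y_2$ with $y_1\in\mathcal{R}(t_{i-1})$ and $y_2\in\mathcal{R}(t_i)$ is, by linearity of the dynamics and convexity of $X_0$ and $U$, within $O(h)=O(1/N)$ of the trajectory value $\varphi(t_{i-1}+\mu h,t_0,(1-\mu)x_0^{(1)}+\mu x_0^{(2)},(1-\mu)u^{(1)}+\mu u^{(2)})\in\mathcal{R}(\intcc{t_{i-1},t_i})$, the $O(h)$ error arising from a uniform Lipschitz-in-time bound on $\varphi$. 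Chaining these estimates via the triangle inequality for $d_H$ gives the required $O(1/N)$ bound uniformly in $i$.

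The main obstacle is extracting the factor of $\lambda$ in the bounds on $\delta_1$ and $\delta_2$: a uniform $O(h^2)$ bound would not suffice because the inflation in $\Gamma_i$ surrounds only $\Omega_i$ and not $\Omega_{i-1}$, so the vanishing of the error at $\lambda=0$ must be exploited. The double-integral-subtraction trick described above achieves this after cancellation of the constant parts of the integrands. Uniformity of all $O(1/N)$ bounds in $i$ rests on the estimate $\|\Omega_i\|\le O(1)$ already established inside the proof of Proposition \ref{prop:OverApproximatingReachableSets}.
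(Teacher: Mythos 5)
Your reduction of the interpolation error $\delta_1$ to the estimate of Lemma \ref{lem:InterpolantMatrix} is sound (though note that bounding $\int_0^{\lambda h}[F-F(0)]$ and $\lambda\int_0^h[F-F(0)]$ separately by the triangle inequality only yields $2\lambda\gamma_h$; the single-integral change-of-variables argument in the proof of Lemma \ref{lem:InterpolantMatrix} is needed to obtain the constant $\gamma_h$ that $\Gamma_i$ actually allows). The genuine gap is in the forced-response term. With your decomposition, $\delta_2(\lambda)=\int_{t_{i-1}}^{t}\phi(t,s)B(s)u(s)\,ds-\lambda\int_{t_{i-1}}^{t_i}\phi(t_i,s)B(s)u(s)\,ds$ for the \emph{same} input signal $u$, and this is in general only $O(h)$, not $O(h^2)$: take $B$ constant (so $M_{\dot B}=0$ and $\beta_h=0$), $A$ negligible so $\phi\approx\id$, $U=\intcc{-1,1}$, and $u=1$ on $\intcc{t_{i-1},t}$, $u=-1$ on $\intoc{t,t_i}$; then $\delta_2\approx 2\lambda(1-\lambda)h$ while your claimed bound $\lambda\beta_h$ vanishes. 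Since $\Gamma_i$ inflates $\Omega_i$ only by $\beta_h+\gamma_h\|\Omega_{i-1}\|=O(h^2)$, the inclusion $x(t)\in\Gamma_i$ cannot be obtained by interpolating toward the \emph{same trajectory's} endpoint $x(t_i)$.

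The missing idea is that the forced responses must be compared at the level of set-valued (Aumann) integrals, exploiting the convexity of $U$: one shows the set inclusion $\int_{t_{i-1}}^{t}\phi(t,s)B(s)U\,ds\subseteq\lambda\bigl(hB(t_i)U+(\alpha_h+\beta_h)\mathbb{B}\bigr)$, which produces, for the given $u$, some \emph{other} $v\in U$ such that the forced response at time $t$ is within $\lambda(\alpha_h+\beta_h)$ of $\lambda hB(t_i)v$; the element of $\Omega_i$ one interpolates toward is then built from $v$, not from $u$. This is exactly what the paper does via Lemma \ref{lem:ApproximatingSetValuedIntegral} (Filippov's lemma) and the sets $E_{i,t}$ and $F_{i,t}$ in its proof; it also requires the decoupling step $E_{i,t}\subseteq F_{i,t}$, which lets the initial state in the $(1-\lambda)$-term differ from that in the $\lambda$-term and is where the $O(1/N)$ (rather than $O(1/N^2)$) error enters. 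Your second part — bounding $d_H$ by passing through $\conv(\mathcal{R}(t_{i-1})\cup\mathcal{R}(t_i))$ and interpolating admissible trajectories — is correct and is a legitimately more direct route to the distance estimate than the paper's, but it presupposes the inclusion, so the gap above must be repaired first.
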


\begin{corollary}
\label{cor:th:OverApproximatingReachableTubes}
Under the hypotheses and in the notation of Theorem
\ref{th:OverApproximatingReachableTubes}, denote
$\widehat R_N = \cup_{i \in \intcc{1;N}} \Gamma^N_i$.
Then $\mathcal{R}(\intcc{t_0,t_f}) \subseteq \widehat R_N$ for all
$N \in \mathbb{N}$, and
$d_H( \mathcal{R}(\intcc{t_0,t_f}), \widehat R_N ) = O(1/N)$
as $N \to \infty$.
\end{corollary}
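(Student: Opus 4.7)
The plan is to derive both claims directly from Theorem \ref{th:OverApproximatingReachableTubes} combined with elementary set-theoretic properties of reachable tubes and of the Hausdorff distance.

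First I would note the partition identity
\[
\mathcal{R}(\intcc{t_0,t_f}) = \bigcup_{i \in \intcc{1;N}} \mathcal{R}(\intcc{t^N_{i-1},t^N_i}),
\]
which is immediate from the definition of the reachable tube in Section \ref{ss:ReachableSetsAndTubes} and the fact that the grid points $t^N_0,\dots,t^N_N$ from \ref{ProposedMethod:General:h} cover $\intcc{t_0,t_f}$. Taking the union over $i$ of the inclusions $\mathcal{R}(\intcc{t^N_{i-1},t^N_i}) \subseteq \Gamma^N_i$ furnished by Theorem \ref{th:OverApproximatingReachableTubes} then yields $\mathcal{R}(\intcc{t_0,t_f}) \subseteq \widehat R_N$, settling the first assertion.

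For the convergence rate I would invoke Lemma \ref{lem:HausdorffDistance}\ref{lem:HausdorffDistance:Union}, which (as is standard) bounds the Hausdorff distance between two finite unions by the maximum of the pairwise Hausdorff distances of corresponding members. Applied here, this gives
\[
d_H\bigl(\mathcal{R}(\intcc{t_0,t_f}),\widehat R_N\bigr)
\le
\max_{i \in \intcc{1;N}} d_H\bigl(\mathcal{R}(\intcc{t^N_{i-1},t^N_i}),\Gamma^N_i\bigr).
\]
By Theorem \ref{th:OverApproximatingReachableTubes}, every term on the right is bounded by $O(1/N)$ as $N \to \infty$, and crucially this bound holds uniformly in $i$. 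Consequently the maximum is itself $O(1/N)$, which is the desired estimate.

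There is no significant obstacle: the only point requiring care is that the rate bound inherited from Theorem \ref{th:OverApproximatingReachableTubes} is uniform with respect to $i$, because without this uniformity the passage from the pairwise bounds to the bound on the maximum would lose the $O(1/N)$ rate. Since this uniformity is part of the statement of Theorem \ref{th:OverApproximatingReachableTubes}, the proof amounts to little more than chaining the two cited results.
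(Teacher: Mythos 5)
Your proposal is correct and matches the paper's own (very brief) justification: the paper obtains the corollary exactly by decomposing $\mathcal{R}(\intcc{t_0,t_f})$ as the union of the subinterval tubes and applying Lemma \ref{lem:HausdorffDistance}\ref{lem:HausdorffDistance:Union} together with the uniform-in-$i$ bound from Theorem \ref{th:OverApproximatingReachableTubes}. Your explicit remark on why the uniformity w.r.t.~$i$ is essential is a correct and worthwhile elaboration of what the paper leaves implicit.
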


\noindent
Our proof of Theorem \ref{th:OverApproximatingReachableTubes} uses
the following auxiliary result.

\begin{lemma}
\label{lem:InterpolantMatrix}
Let $\gamma$ be defined by \ref{ProposedMethod:General:alphagamma}, and let
$a,b \in \intcc{t_0,t_f}$ with $a < b$.
Then
$\| \phi(t,a) - \psi(t,a,b) \| \le (t-a)(b-a)^{-1} \gamma(b-a)$
for all $t \in \intcc{a,b}$,
where
$
\psi(t,a,b)
=
\id + (\phi(b,a) - \id) (t-a) / (b-a)
$.
\end{lemma}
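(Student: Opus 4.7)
The plan is to realize $g(t) := \phi(t,a) - \psi(t,a,b)$ as a Peano-type integral against $\partial_t^2 \phi$, and then compute the resulting bound in closed form.

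Write $\tau = t - a$ and $L = b - a$, and let $F(s) = \partial_s \phi(s,a) = A(s)\phi(s,a)$. From $\phi(t,a) = \id + \int_a^t F(s)\,ds$ and $\phi(b,a) - \id = \int_a^b F(s)\,ds$, the definition of $\psi$ gives
\[
g(t) = \int_a^t F(s)\,ds - \frac{\tau}{L}\int_a^b F(s)\,ds.
\]
I would substitute $F(s) = F(a) + \int_a^s F'(r)\,dr$ into this identity. The $F(a)$ contributions cancel exactly (since $\tau - \tau \cdot L/L = 0$), and after swapping orders of integration and regrouping the remaining pieces over $[a,t]$ and $[t,b]$, one obtains the Peano-kernel representation
\[
g(t) = -\int_a^b K(t,s)\,F'(s)\,ds,
\]
with nonnegative kernel $K(t,s) = (s-a)(b-t)/L$ for $s \in [a,t]$ and $K(t,s) = (t-a)(b-s)/L$ for $s \in [t,b]$.

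Since $F'(s) = (\dot A(s) + A(s)^2)\phi(s,a)$, hypothesis \ref{i:ProblemStatement:Matrices} together with \ref{e:ExpEstimates} yield $\|F'(s)\| \le (M_{\dot A} + M_A^2)\,\e^{(s-a)M_A}$. Taking norms in the Peano representation and evaluating $\int_a^b K(t,s)\,\e^{(s-a)M_A}\,ds$ by integration by parts on each of the two pieces (the two instances of $\tau(L-\tau)\e^{\tau M_A}/(LM_A)$ cancel against each other) produces the closed-form expression
\[
\int_a^b K(t,s)\,\e^{(s-a)M_A}\,ds = \frac{\tau(\e^{LM_A}-1) - L(\e^{\tau M_A}-1)}{L\,M_A^2}.
\]

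To conclude, I would factor $(M_{\dot A} + M_A^2)/M_A^2 = 1 + M_{\dot A}/M_A^2$ out of the prefactor; the desired estimate then reduces to $\tau(\e^{LM_A}-1) - L(\e^{\tau M_A}-1) \le \tau\,r(L)$, which, using $r(L) = \e^{LM_A} - 1 - LM_A$, is equivalent to the elementary convexity inequality $\e^{\tau M_A} \ge 1 + \tau M_A$. The main obstacle is the explicit integration by parts and the careful bookkeeping in the integral evaluation, but these are routine and the key cancellations are forced by the structure of the Peano kernel.
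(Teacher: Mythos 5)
Your proof is correct, but it reaches the integral representation of the error by a different route than the paper. The paper's proof first uses the change of variable $\tau(s) = a + (t-a)(s-a)/(b-a)$ to rewrite $\phi(t,a)-\psi(t,a,b)$ as $\tfrac{t-a}{b-a}\int_a^b \bigl( A(\tau(s))\phi(\tau(s),a) - A(s)\phi(s,a) \bigr)\,ds$, then expresses the integrand as $\int_s^{\tau(s)} (\dot A(z)+A(z)^2)\phi(z,a)\,dz$ and bounds the inner integral crudely by $(M_{\dot A}+M_A^2)(\exp((s-a)M_A)-1)/M_A$ before integrating in $s$; you instead invoke the classical Peano-kernel representation of the linear-interpolation error and evaluate the kernel integral $\int_a^b K(t,s)\exp((s-a)M_A)\,ds$ exactly. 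The two representations are in fact the same object (swapping the order of integration in the paper's double integral reproduces your kernel $K$), and both hinge on the identical bound $\|\tfrac{d}{ds}[A(s)\phi(s,a)]\| \le (M_{\dot A}+M_A^2)\exp((s-a)M_A)$ from \ref{i:ProblemStatement:Matrices} and \ref{e:ExpEstimates}. What your version buys is a sharper intermediate estimate --- your closed-form value $\bigl(\tau(\exp(LM_A)-1)-L(\exp(\tau M_A)-1)\bigr)/(LM_A^2)$ is dominated by the paper's $\tau\, r(L)/(L M_A^2)$ precisely via the convexity inequality $\exp(\tau M_A)\ge 1+\tau M_A$ that you identify --- at the cost of an explicit integration by parts that the paper's cruder bound sidesteps. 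All your computations check out (the kernel, the cancellation of the $F(a)$ terms, the closed-form integral, and the final reduction to $r(L)=\exp(LM_A)-1-LM_A$ as defined in \ref{ProposedMethod:General:beta}), so this is a valid and marginally tighter proof of the stated estimate.
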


\begin{proof}
The claim is obvious for $t \in \{ a, b \}$, so we
suppose that $t_0 \le a < t < b \le t_f$. Then, by a change of
variable,
\[
\frac{t-a}{b-a}
\int_a^b
A(\tau(s)) \phi(\tau(s),a) ds
=
\int_a^t A(s) \phi(s,a) ds,
\]
where $\tau(s) = (t-a)(s-a)/(b-a) + a \in \intcc{a,s}$,
and so the difference $\phi(t,a) - \psi(t,a,b)$ can be written as
\begin{equation}
\label{lem:InterpolantMatrix:proof:e1}
\frac{t-a}{b-a}
\int_a^b
A(\tau(s)) \phi(\tau(s),a) - A(s)\phi(s,a)
ds.
\end{equation}
As the map $s \mapsto A(s) \phi(s,a)$ is smooth, the integrand in
\ref{lem:InterpolantMatrix:proof:e1} takes the form
$\int_s^{\tau(s)} ( \dot{A}(z) + A(z)^2 ) \phi(z,a) dz$. The claim then
follows from the estimate \ref{e:ExpEstimates} and assumption
\ref{i:ProblemStatement:Matrices}.
\end{proof}

We mention in passing that, in the time-invariant case of
\ref{eq:LTVsystem} with $B(t) = \id$, our estimates in Lemmas
\ref{lem:ApproximatingSetValuedIntegral} and
\ref{lem:InterpolantMatrix} reduce to those in
\cite[Lemma 2]{LeGuernicGirard09} and
\cite[p.~260, last inequ.]{LeGuernicGirard09}, respectively. Another
related but less precise estimate is given in
\cite[Lemma 1]{BotchkarevTripakis00}.
The mathematical tools we have used to treat the general
time-varying case are quite different from the ones used in
\cite{LeGuernicGirard09,BotchkarevTripakis00}.%

\begin{proof}[Proof of Theorem \ref{th:OverApproximatingReachableTubes}]
For the sake of simplicity, throughout this proof we drop
the superscript $N$ from our notation. Moreover, we do not
mention the domains $\mathbb{N}$, $\intcc{1;N}$ and
$\intcc{t_{i-1},t_i}$ of $N$, $i$ and $t$, and asymptotic estimates
are always meant to hold for $N \to \infty$,
uniformly w.r.t.~$i$ and $t$.
The map $\psi$ is defined in Lemma \ref{lem:InterpolantMatrix}, and
$h$ is defined in \ref{ProposedMethod:General:h}.

We claim that $\mathcal{R}(t) \subseteq E_{i,t}$ for all $N$, $i$ and
$t$, where
\begin{align*}
E_{t,i}
&=
\psi(t,t_{i-1},t_i) R(t_{i-1}) + (t - t_{i-1}) h^{-1} M_i,
\\
M_i
&=
h B(t_i) U + ( \alpha_h + \beta_h + \gamma_h \| \Omega_{i-1} \| ) \mathbb{B}.
\end{align*}
Indeed, using Lemmata \ref{lem:InterpolantMatrix} and
\ref{lem:HausdorffDistance}\ref{lem:HausdorffDistance:3},\ref{lem:HausdorffDistance:5},
the compactness of reachable tubes, and Proposition
\ref{prop:OverApproximatingReachableSets}, we see that
\begin{equation}
\label{th:OverApproximatingReachableTubes:proof:1}
\phi(t,t_{i-1}) R(t_{i-1})
\subseteq
\psi(t,t_{i-1},t_i) R(t_{i-1})
+
\frac{t-t_{i-1}}{h} \gamma_h \| \Omega_{i-1} \| \mathbb{B}.
\end{equation}
Moreover, we obviously have
$\| B(t_i) - B(t) \| \le h M_{\dot{B}}$, and in turn,
$
d_H( (t-t_{i-1}) B(t) U, (t-t_{i-1}) B(t_i) U )
\le
(t-t_{i-1}) \beta_h /h
$
by Lemma \ref{lem:HausdorffDistance}\ref{lem:HausdorffDistance:3}.
Then Lemmata \ref{lem:ApproximatingSetValuedIntegral} and
\ref{lem:HausdorffDistance}\ref{lem:HausdorffDistance:5},
the compactness of $U$, and the fact that $\alpha(s) / s$ is
monotonically increasing in $s$, imply
\begin{equation}
\label{th:OverApproximatingReachableTubes:proof:2}
\int_{t_{i-1}}^t \phi(t,s) B(s) U ds
\subseteq
\frac{t - t_{i-1}}{h}
(
h B(t_i) U + ( \alpha_h + \beta_h ) \mathbb{B}
)
\end{equation}
for all $N$, $i$ and $t$. Our claim then follows from the identity
\ref{e:SemiGroupProperty}. Moreover, the estimates from which
the inclusions \ref{th:OverApproximatingReachableTubes:proof:1} and
\ref{th:OverApproximatingReachableTubes:proof:2} have been obtained
also show that $d_H( R(t), E_{i,t} ) \le O(1/N^2)$.

Next observe that the set $E_{i,t}$ takes the form
$
\Set{ (1-\lambda) x + \lambda \phi(t_i,t_{i-1}) x + \lambda m}{%
x \in \mathcal{R}(t_{i-1}), m \in M_i}
$,
where $\lambda = (t - t_{i-1}) / h$, and so
$E_{i,t} \subseteq F_{i,t}$ for all $N$, $i$ and $t$, where
$F_{i,t}$ is defined to be the set
\[
\Set{ (1-\lambda) x + \lambda \phi(t_i,t_{i-1}) y + \lambda m}{%
x,y \in \mathcal{R}(t_{i-1}), m \in M_i}.
\]
Moreover, if $z \in F_{i,t}$, then there exist
$x,y \in \mathcal{R}(t_{i-1})$ and $m \in M_i$ satisfying
$z = (1-\lambda) x + \lambda \phi(t_i,t_{i-1}) y + \lambda m$. We
define
$x' = (1-\lambda)x + \lambda y \in \mathcal{R}(t_{i-1})$ and
$z' = (1-\lambda)x' + \lambda \phi(t_i,t_{i-1})x' + \lambda m$
to obtain
$
z - z'
=
\lambda (1-\lambda) (\phi(t_i,t_{i-1}) - \id) (y-x)
$.
The estimate \ref{e:ExpEstimates} then implies
$
\| z - z' \|
\le
( \exp(h M_A) - 1 ) \| \Omega_{i-1} \| / 2
$,
and as $\| \Omega_i \| \le O(1)$ by Proposition
\ref{prop:OverApproximatingReachableSets} and the compactness of
reachable tubes, we arrive at $d_H( E_{i,t}, F_{i,t} ) \le O(1/N)$.

So far, we have shown that $\mathcal{R}(t) \subseteq F_{i,t}$ for all
$N$, $i$ and $t$, and that
$d_H( \mathcal{R}(t), F_{i,t} ) \le O(1/N)$. It follows that
$
\mathcal{R}( \intcc{ t_{i-1}, t_i } )
\subseteq
\cup_{t \in \intcc{ t_{i-1}, t_i } } F_{i,t}
$,
and by Lemma
\ref{lem:HausdorffDistance}\ref{lem:HausdorffDistance:Union}, the
Hausdorff distance between the two sets does not exceed $O(1/N)$.
Next observe that
\[
\cup_{t \in \intcc{ t_{i-1}, t_i } } F_{i,t}
=
\conv
\left(
\mathcal{R}( t_{i-1} )
\cup
( \phi(t_i,t_{i-1}) \mathcal{R}( t_{i-1} ) + M_i )
\right)
\]
by Lemma
\ref{lem:OperationsOnConvexSets}\ref{lem:OperationsOnConvexSets:CHOfUnionOfConvexSets},
and that
$
\phi(t_i,t_{i-1}) \mathcal{R}( t_{i-1} )
\subseteq
\widetilde{\phi} (t_i,t_{i-1}) \Omega_{i-1}
+
\theta(h) \| \Omega_{i-1} \| \mathbb{B}
$
by Lemma
\ref{lem:HausdorffDistance}\ref{lem:HausdorffDistance:3},\ref{lem:HausdorffDistance:5},
the estimate \ref{i:ProblemStatement:ApproxPHI:LocalError}, and
Proposition \ref{prop:OverApproximatingReachableSets}. Thus,
$
\cup_{t \in \intcc{ t_{i-1}, t_i } } F_{i,t}
\subseteq
\Gamma_i
$
for all $N$ and $i$, and the aforementioned results also show that the
distance of the two sets does not exceed $O(1/N)$, which completes
our proof.
\end{proof}

So far, we have demonstrated that our method
\ref{ProposedMethod:General} yields over-approximations of reachable
sets and tubes, for any uncertainty sets $X_0$ and $U$ satisfying
Hypothesis \ref{i:ProblemStatement:Uncertainty}, assuming the ability
to compute with compact convex sets in finite dimension.
By suitably representing these sets and the set operations in
\ref{ProposedMethod:General}, thereby possibly specializing to a
subclass of sets, the method can be implemented on a computer.
See e.g.~\cite{LeGuernic09,AlthoffFrehse16} for a
discussion of the merits of several classes of sets and their
representations in reachability analysis.

\subsection{Zonotopic Over-approximation}
\label{ss:MainResults:Zonotopes}

In this section, we present a variant of our method
\ref{ProposedMethod:General} for the class of zonotopes, i.e.,
for sets of the form
\begin{equation}
\label{def:Zonotopes:e}
\cZonotope(c,G)
=
c + G \intcc{-1,1}^q
\end{equation}
for some $c \in \mathbb{R}^n$, $G \in \mathbb{R}^{n \times q}$, and
$q \in \mathbb{Z}_{+}$, where $c$ is the \concept{center} and the
columns of $G$ are the \concept{generators} of $\cZonotope(c,G)$.
In particular, we assume that the uncertainty of the system
\ref{eq:LTVsystem} is given as zonotopes,
\begin{equation}
\label{e:ZonotopicUncertainty}
X_0
=
\cZonotope( a, E )
\text{\ \ and\ \ }
U
=
\cZonotope( c, G ),
\end{equation}
where $a \in \mathbb{R}^n$,
$c \in \mathbb{R}^m$,
$E \in \mathbb{R}^{n \times p}$,
$G \in \mathbb{R}^{m \times q}$, and
$p,q \in \mathbb{Z}_{+}$.

A problem with zonotopic implementations of
\ref{ProposedMethod:General} is that zonotopes are not closed under
convex hulls, and so the sets $\Gamma_i$ defined in
\ref{ProposedMethod:General:gammai} are not, in general, zonotopes.
We here follow an idea by \person{Girard} \cite{Girard05} and
replace $\Gamma_i$ by a zonotope obtained using the enclosure operator
$
\Enclosure
\colon
\left( \mathbb{R}^n \times \mathbb{R}^{n \times p} \right)^2
\to
\mathbb{R}^n \times \mathbb{R}^{n \times (2 p + 1)}
$
given by
\begin{equation}
\label{def:ZonotopeEnclosure:e}
\Enclosure((b,F),(c,G))
=
\left(
\frac{b+c}{2},\left(\frac{F+G}{2},\frac{b-c}{2},\frac{F-G}{2}\right)
\right)
\end{equation}
for all $b,c \in \mathbb{R}^n$, $F, G \in \mathbb{R}^{n \times p}$ and
$p \in \mathbb{Z}_{+}$.
Specifically, we propose the following variant of our method
\ref{ProposedMethod:General} for the case of zonotopic uncertainties 
\ref{e:ZonotopicUncertainty}.
Given a time discretization parameter $N \in \mathbb{N}$, we shall
compute sequences
$( b_i )_{i \in \intcc{0;N}}$,
$( F_i )_{i \in \intcc{0;N}}$,
$( d_i )_{i \in \intcc{1;N}}$ and
$( H_i )_{i \in \intcc{1;N}}$
satisfying the following conditions for all $i \in \intcc{1;N}$.
\begin{subequations}
\label{ProposedMethod:Zonotopes}
\begin{align}
\label{ProposedMethod:Zonotopes:b0F0}
b_0 &= a
\text{\ \ and\ \ }
F_0 = E,
\\
\label{ProposedMethod:Zonotopes:m}
m_{i-1}
&=
\| (b_{i-1}, F_{i-1} ) \|_{\infty}
\text{\ \ and\ \ }
K_i = h B(t_i) G,
\\
\label{ProposedMethod:Zonotopes:bi}
b_i
&=
\widetilde{\phi}(t_i,t_{i-1}) b_{i-1} + h B(t_i) c
\\
\label{ProposedMethod:Zonotopes:Fi}
F_i
&=
\left(
\widetilde{\phi}(t_i,t_{i-1}) F_{i-1},
K_i,
( \alpha_h + \theta_h m_{i-1} ) \id
\right),
\\
\label{ProposedMethod:Zonotopes:diJi}
(d_i,J_i)
&=
\Enclosure
\left(
( b_{i-1}, F_{i-1} ),
( b_i, \widetilde{\phi}(t_i,t_{i-1}) F_{i-1} )
\right),
\\
\label{ProposedMethod:Zonotopes:Hi}
H_i
&=
\left(
J_i,
K_i,
( \alpha_h + \beta_h + ( \gamma_h + \theta_h ) m_{i-1} ) \id
\right),
\end{align}
\end{subequations}
where $h$, $t_i$, $\alpha$, $\beta$ and $\gamma$ are given by
\ref{ProposedMethod:General:h}, \ref{ProposedMethod:General:beta} and
\ref{ProposedMethod:General:alphagamma} and the norm $\| \cdot \|$ in
\ref{ProposedMethod:General:beta} and
\ref{ProposedMethod:General:alphagamma} is the maximum norm.

We note that the norm in \ref{ProposedMethod:Zonotopes:m} is
straightforward to compute. See Lemma \ref{lem:Zonotopes}. Moreover,
\ref{ProposedMethod:Zonotopes:b0F0}-\ref{ProposedMethod:Zonotopes:Fi}
is a straightforward implementation of the set operations in
\ref{ProposedMethod:General:omega0}-\ref{ProposedMethod:General:omegai}
into linear algebraic operations on centers and generators, and using
induction it easily follows that
\begin{equation}
\label{e:ZonotopicReachableSetImplementation}
\Omega_i = \cZonotope( b_i, F_i )
\text{\ \ for all $i \in \intcc{0;N}$},
\end{equation}
provided that the norm $\| \cdot \|$ in \ref{ProposedMethod:General}
is the maximum norm and $\mathbb{B}$ is the respective closed unit
ball.
Thus, by Proposition \ref{prop:OverApproximatingReachableSets}, the
pairs $( b_i, F_i )$ produced by algorithm
\ref{ProposedMethod:Zonotopes} represent zonotopic
over-approximations of reachable sets $\mathcal{R}(t_i)$ with first
order approximation error.

The case of reachable tubes is more involved and is the subject of
Theorem \ref{th:ZonotopicOverApproximationofRT} and its Corollary
\ref{cor:th:ZonotopicOverApproximationofRT} below. We shall
demonstrate that the pairs $(d_i,H_i)$ produced by the algorithm
\ref{ProposedMethod:Zonotopes} represent zonotopes
$\cZonotope(d_i,H_i)$ over-approximating the sets $\Gamma_i$ defined
in \ref{ProposedMethod:General:gammai}. Then, by Theorem
\ref{th:OverApproximatingReachableTubes}, these zonotopes over-approximate
the reachable tubes $\mathcal{R}( \intcc{t_{i-1}, t_i})$, and we shall
also show that first order convergence is retained. This way, we
obtain a solution to Problem \ref{problem:definition} which applies in
the case that the uncertainty of the system \ref{eq:LTVsystem} is
given as zonotopes, and, in contrast to the more general algorithm
\ref{ProposedMethod:General}, this solution can be directly
implemented on a computer.
As before, we shall use the superscript $N$ to indicate that,
e.g.~the sequence $( F^N_i )_{i \in \intcc{0;N}}$ has been
computed by our method \ref{ProposedMethod:Zonotopes} for a specific
value of the time discretization parameter, and similarly for $b_i$, $d_i$
and $H_i$.

\begin{theorem}[Zonotopic Over-approximation of Reachable Tubes]
\label{th:ZonotopicOverApproximationofRT}
Assume \ref{e:ZonotopicUncertainty}, and
for each $N \in \mathbb{N}$, let sequences
$(t^N_i)_{i \in \intcc{0;N}}$,
$(d^N_i)_{i \in \intcc{1;N}}$
and $(H^N_i)_{i \in \intcc{1;N}}$
be defined by
\ref{ProposedMethod:General:h}, \ref{ProposedMethod:General:beta},
\ref{ProposedMethod:General:alphagamma} and
\ref{ProposedMethod:Zonotopes}, where the norm $\| \cdot \|$ in
\ref{ProposedMethod:General:beta} and
\ref{ProposedMethod:General:alphagamma} is the maximum norm,
and denote $\Lambda^N_i = \cZonotope( d^N_i, H^N_i )$.
\\
Then $\mathcal{R}(\intcc{t_{i-1},t_i}) \subseteq \Lambda^N_i$
for all $N \in \mathbb{N}$ and all $i \in \intcc{1;N}$, and
$d_H(\mathcal{R}(\intcc{t_{i-1},t_i}), \Lambda^N_i) \le O(1/N)$
as $N \to \infty$, uniformly w.r.t.~$i$.
\end{theorem}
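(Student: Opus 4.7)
The plan is to reduce the statement to Theorem \ref{th:OverApproximatingReachableTubes} by establishing $\Gamma_i^N \subseteq \Lambda_i^N$ with Hausdorff gap of order $O(1/N)$, uniformly in $i$, and then invoking the triangle inequality. Three preparatory observations underlie the reduction: by \ref{e:ZonotopicReachableSetImplementation}, $\Omega_i^N = \cZonotope(b_i^N, F_i^N)$; a direct calculation of the maximum-norm of a zonotope (Lemma \ref{lem:Zonotopes}) identifies $m_i^N = \|(b_i^N, F_i^N)\|_\infty$ with $\|\Omega_i^N\|_\infty$, so the inflation radii in \ref{ProposedMethod:Zonotopes:Hi} match those in \ref{ProposedMethod:General:gammai}; and $m_i^N = O(1)$ uniformly in $i$ by Proposition \ref{prop:OverApproximatingReachableSets}.

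To establish $\Gamma_i^N \subseteq \Lambda_i^N$, I would write
\[
\Omega_i^N + (\beta_h + \gamma_h m_{i-1}^N)\mathbb{B} = \cZonotope(b_i^N, \widetilde{\phi}(t_i^N, t_{i-1}^N) F_{i-1}^N) + C_i^N,
\]
where $C_i^N = K_i^N \intcc{-1,1}^q + (\alpha_h + \theta_h m_{i-1}^N + \beta_h + \gamma_h m_{i-1}^N)\mathbb{B}$ collects the ball-inflations, and apply the inclusion $\conv(A \cup (B + C)) \subseteq \conv(A \cup B) + C$, valid for convex $A, B, C$ with $0 \in C$, to obtain $\Gamma_i^N \subseteq \conv(\Omega_{i-1}^N \cup \cZonotope(b_i^N, \widetilde{\phi}(t_i^N, t_{i-1}^N) F_{i-1}^N)) + C_i^N$. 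The Girard-style enclosure majorization $\conv(\cZonotope(b,F) \cup \cZonotope(c,G)) \subseteq \cZonotope(\Enclosure((b,F),(c,G)))$ (cf.~Lemma \ref{lem:Zonotopes} or \cite[Prop.~1]{Girard05}) then bounds the convex-hull factor by $\cZonotope(d_i^N, J_i^N)$, yielding $\Gamma_i^N \subseteq \cZonotope(d_i^N, J_i^N) + C_i^N = \Lambda_i^N$.

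For the Hausdorff bound $d_H(\Gamma_i^N, \Lambda_i^N) = O(1/N)$, two $O(1/N)$-errors must be accounted for. First, the enclosure $\cZonotope(d_i^N, J_i^N)$ exceeds $\conv(\Omega_{i-1}^N \cup \cZonotope(b_i^N, \widetilde{\phi}(t_i^N, t_{i-1}^N) F_{i-1}^N))$ by its two correction generators $\tfrac{1}{2}(b_i^N - b_{i-1}^N)$ and $\tfrac{1}{2}(\widetilde{\phi}(t_i^N, t_{i-1}^N) - \id) F_{i-1}^N$, each of $\|\cdot\|_\infty$-norm $O(1/N)$ by the estimate $\|\widetilde{\phi}(t_i^N, t_{i-1}^N) - \id\|_\infty \le (\e^{h M_A} - 1) + \theta(h) = O(1/N)$ together with the uniform boundedness of $b_{i-1}^N$ and $F_{i-1}^N$. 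Second, the step $\conv(A \cup B) + C_i^N \subseteq \Gamma_i^N + \|C_i^N\|_\infty \mathbb{B}$ contributes an error $\|C_i^N\|_\infty = O(1/N)$, since $0 \in C_i^N$ implies $\conv(A \cup B) \subseteq \Gamma_i^N$, so any offset by $c \in C_i^N$ displaces a point of $\Gamma_i^N$ by at most $\|C_i^N\|_\infty$. Summing both contributions gives $d_H(\Gamma_i^N, \Lambda_i^N) = O(1/N)$, and Theorem \ref{th:OverApproximatingReachableTubes} combined with the triangle inequality completes the proof.

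The main obstacle will be the quantitative version of the enclosure inequality: one must argue that when $\cZonotope(b,F)$ and $\cZonotope(c,G)$ differ by $O(1/N)$ in both centers and generators, the overshoot of their $\Enclosure$ beyond their convex hull is $O(1/N)$ in Hausdorff distance. This should follow from inspection of the enclosure formula, whose correction generators vanish when the two zonotopes coincide, but a careful norm estimate tying the Hausdorff excess to those generators' magnitudes is the technical heart of the argument.
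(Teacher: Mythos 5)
Your proposal is correct and follows essentially the same route as the paper: reduce to Theorem \ref{th:OverApproximatingReachableTubes} by showing $\Gamma_i^N \subseteq \Lambda_i^N$ with $d_H(\Gamma_i^N,\Lambda_i^N)=O(1/N)$, via the decomposition $\Gamma_i=\conv(P\cup(M+W))$, the inclusion $\conv(P\cup(M+W))\subseteq W+\conv(P\cup M)$ (the paper's Lemma \ref{lem:OverApproximatingCH}), and the Girard enclosure. The ``quantitative enclosure estimate'' you flag as the main obstacle is exactly the paper's Lemma \ref{lem:Zonotopes}\ref{lem:Zonotopes:Enclosure}, which gives the slightly sharper bound $\|F-G\|$ (the center-difference generator contributes nothing, as the matching point in the convex hull absorbs it), though your additive bound including $\tfrac12\|b-c\|$ is also valid and still yields $O(1/N)$.
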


\begin{corollary}
\label{cor:th:ZonotopicOverApproximationofRT}
Under the hypotheses and in the notation of Theorem
\ref{th:ZonotopicOverApproximationofRT}, denote
$\widehat R_N = \cup_{i \in \intcc{1;N}} \Lambda^N_i$.
Then $\mathcal{R}(\intcc{t_0,t_f}) \subseteq \widehat R_N$ for all
$N \in \mathbb{N}$, and
$d_H( \mathcal{R}(\intcc{t_0,t_f}), \widehat R_N ) = O(1/N)$
as $N \to \infty$.
\end{corollary}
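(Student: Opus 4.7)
The plan is to reduce this corollary to Theorem \ref{th:ZonotopicOverApproximationofRT} in the same way that Corollary \ref{cor:th:OverApproximatingReachableTubes} was reduced to Theorem \ref{th:OverApproximatingReachableTubes}. The starting observation is that the time interval $\intcc{t_0,t_f}$ decomposes as $\cup_{i \in \intcc{1;N}} \intcc{t^N_{i-1}, t^N_i}$, whence by the definition of reachable tubes,
\[
\mathcal{R}( \intcc{t_0,t_f} )
=
\bigcup_{i \in \intcc{1;N}} \mathcal{R}( \intcc{t^N_{i-1}, t^N_i} ).
\]

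The first claim then follows directly: by Theorem \ref{th:ZonotopicOverApproximationofRT}, $\mathcal{R}( \intcc{t^N_{i-1}, t^N_i} ) \subseteq \Lambda^N_i$ for every $i \in \intcc{1;N}$, so taking the union over $i$ yields $\mathcal{R}( \intcc{t_0,t_f} ) \subseteq \widehat R_N$ for every $N \in \mathbb{N}$.

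For the Hausdorff bound, I would invoke Lemma \ref{lem:HausdorffDistance}\ref{lem:HausdorffDistance:Union}, which controls the Hausdorff distance between two unions by the pointwise Hausdorff distances of the components. Combined with the uniform estimate $d_H( \mathcal{R}( \intcc{t^N_{i-1}, t^N_i} ), \Lambda^N_i ) \le O(1/N)$ from Theorem \ref{th:ZonotopicOverApproximationofRT}, this immediately yields $d_H( \mathcal{R}( \intcc{t_0,t_f} ), \widehat R_N ) \le O(1/N)$ as $N \to \infty$, completing the proof.

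Since Theorem \ref{th:ZonotopicOverApproximationofRT} already delivers both the inclusion and the uniform first order accuracy piecewise, there is no real obstacle here; the corollary is essentially a packaging step. The only subtlety worth mentioning is that the $O(1/N)$ bound in Theorem \ref{th:ZonotopicOverApproximationofRT} must be uniform with respect to $i$, which is explicitly stated there, so the pointwise bounds can indeed be aggregated via the union lemma without picking up an extra factor depending on $N$.
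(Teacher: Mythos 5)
Your proof is correct and matches the paper's intended argument exactly: the paper obtains this corollary "immediately" from Theorem \ref{th:ZonotopicOverApproximationofRT} via Lemma \ref{lem:HausdorffDistance}\ref{lem:HausdorffDistance:Union}, which is precisely your decomposition-plus-union-lemma route, including the correct observation that uniformity of the $O(1/N)$ bound in $i$ is what makes the aggregation work.
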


\noindent
Our proof of Theorem \ref{th:ZonotopicOverApproximationofRT} uses
the following auxiliary result.

\begin{lemma}
\label{lem:OverApproximatingCH}
Let $\Omega, \Gamma, W \subseteq \mathbb{R}^n$ be non-empty, compact
and convex, and suppose that
$0 \in W$.
Then
\begin{equation}
\label{e:lem:OverApproximatingCH}
\conv( \Omega \cup( \Gamma + W ) )
\subseteq
W + \conv( \Omega \cup \Gamma ),
\end{equation}
and
the Hausdorff distance between the two sets does not exceed $\| W \|$.
\end{lemma}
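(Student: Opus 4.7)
The plan is to reduce the claim to a direct manipulation of convex combinations, using the two facts that $\Omega$, $\Gamma$, and $\Gamma+W$ are convex (so the convex hull of $\Omega$ with another convex set is just the set of two-point convex combinations), and that $W$ is convex with $0 \in W$ (so $tW \subseteq W$ for every $t \in [0,1]$, a property I will exploit to ``absorb'' scalar multiples of elements of $W$ back into $W$).

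For the inclusion \ref{e:lem:OverApproximatingCH}, I would take an arbitrary $z \in \conv(\Omega \cup (\Gamma+W))$. Since $\Omega$ and $\Gamma+W$ are both convex, $z$ admits the representation $z = \lambda x + (1-\lambda)(y+w)$ with $\lambda \in [0,1]$, $x \in \Omega$, $y \in \Gamma$, $w \in W$. I would then rewrite this as
\[
z = (1-\lambda) w + \bigl(\lambda x + (1-\lambda) y\bigr),
\]
observe that $\lambda x + (1-\lambda) y \in \conv(\Omega \cup \Gamma)$, and note that $(1-\lambda) w = (1-\lambda) w + \lambda \cdot 0 \in W$ by convexity of $W$ and $0 \in W$. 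This places $z$ in $W + \conv(\Omega \cup \Gamma)$.

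For the Hausdorff distance bound, the inclusion already shows that every point of $\conv(\Omega\cup(\Gamma+W))$ lies in $W+\conv(\Omega\cup\Gamma)$, so I only need to bound the one-sided distance from $W + \conv(\Omega \cup \Gamma)$ to $\conv(\Omega \cup (\Gamma + W))$. Given $z = w + v$ with $w \in W$ and $v = \lambda x + (1-\lambda) y$, $x \in \Omega$, $y \in \Gamma$, I would pick the matched point $z' = \lambda x + (1-\lambda)(y + w) \in \conv(\Omega \cup (\Gamma + W))$ and compute $z - z' = w - (1-\lambda) w = \lambda w$, yielding $\|z - z'\| \le \|w\| \le \|W\|$. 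Taking the supremum over $z$ in the larger set finishes the argument via the appropriate part of Lemma \ref{lem:HausdorffDistance}.

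There is no real obstacle here; the only subtlety is the choice of representative $z'$ (matching the coefficient $1-\lambda$ on $w$ rather than, e.g., adding the full $w$), which is what makes the distance estimate come out to exactly $\|W\|$ rather than something larger. I should also briefly note that the representation of points in $\conv(A \cup B)$ as two-point convex combinations when $A,B$ are convex is standard and can be invoked without proof.
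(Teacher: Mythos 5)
Your proof is correct and follows essentially the same route as the paper's: the same two-point convex combination representation for the inclusion, the same absorption of $(1-\lambda)w$ into $W$ via $0\in W$, and the same choice of matched point $z' = \lambda x + (1-\lambda)(y+w)$ giving $z - z' = \lambda w$ for the distance bound.
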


\begin{proof}
Let $r \in \conv( \Omega \cup( \Gamma + W ) )$. Then, by Lemma
\ref{lem:OperationsOnConvexSets}\ref{lem:OperationsOnConvexSets:CHOfUnionOfConvexSets},
there exist $\lambda \in \intcc{0,1}$, $x \in \Omega$,
$y \in \Gamma$ and $z \in W$ such that
\[
r
= \lambda x + ( 1 - \lambda )( y + z )
= \lambda x + ( 1 - \lambda ) y + ( 1 - \lambda ) z.
\]
Notice that $( 1 - \lambda ) z \in W$ as $0,z \in W$.
Hence, $r \in W + \conv( \Omega \cup \Gamma )$ which implies
\ref{e:lem:OverApproximatingCH}.
Let $s \in W + \conv( \Omega \cup \Gamma )$, then there exist
$\lambda \in \intcc{0,1}$, $x \in \Omega$, $y \in \Gamma$, $z \in W$
such that 
$
s = \lambda x+ ( 1 - \lambda ) y + z
$.
Define
$
t
=
\lambda x + ( 1 - \lambda ) ( y + z )
\in
\conv( \Omega \cup( \Gamma + W ) )
$.
Then we have $s - t = \lambda z$, and so
$
\| s - t \| \le \| W \|
$,
which proves the bound.
\end{proof}

\begin{proof}[Proof of Theorem \ref{th:ZonotopicOverApproximationofRT}]
For each $N \in \mathbb{N}$, let $h^N$ and sequences
$(\Omega^N_i)_{i \in \intcc{0;N}}$,
$(\Gamma^N_i)_{i \in \intcc{1;N}}$,
$(b^N_i)_{i \in \intcc{0;N}}$,
$(F^N_i)_{i \in \intcc{0;N}}$,
$(J^N_i)_{i \in \intcc{1;N}}$,
$(K^N_i)_{i \in \intcc{1;N}}$ and
$(m_i^N)_{i \in \intco{0;N}}$
be defined by
\ref{ProposedMethod:General} and \ref{ProposedMethod:Zonotopes}.
In the sequel, we drop
the superscript $N$ from our notation, and often we do
not mention the domains $\mathbb{N}$ and $\intcc{1;N}$
of $N$ and $i$. Everything is w.r.t.~the maximum norm, here
denoted by $\| \cdot \|$. This applies, in particular, to the norm
$\| \cdot \|$ and to the unit ball $\mathbb{B}$ in
\ref{ProposedMethod:General}.

We claim that $\Gamma_i \subseteq \Lambda_i$ for all $N$ and $i$, and
that $d_H( \Gamma_i, \Lambda_i ) \le O(1/N)$,
where asymptotic estimates are always meant to hold
for $N \to \infty$, uniformly w.r.t.~$i$.
The theorem then follows from an application of Theorem
\ref{th:OverApproximatingReachableTubes}.

To prove the claim, let $N \in \mathbb{N}$ and $i \in \intcc{1;N}$,
and denote
$P = \Omega_{i-1}$,
$L = \widetilde{\phi}(t_i,t_{i-1})$,
$w = h B(t_i) c$,
$M = L P + w$,
and
$
W
=
h B(t_i)( U - c )
+
( \alpha_h + \beta_h + ( \gamma_h + \theta_h ) \| P \| ) \mathbb{B}
$.
Then $\Gamma_i = \conv( P \cup( M + W ) )$ by
\ref{ProposedMethod:General:omegai} and
\ref{ProposedMethod:General:gammai}, and so
$
\Gamma_i
\subseteq
W + \conv( P \cup (L P + w ) )
$
by Lemma \ref{lem:OverApproximatingCH}, and in turn, Lemma
\ref{lem:Zonotopes}\ref{lem:Zonotopes:Enclosure},
\ref{ProposedMethod:Zonotopes:bi} and
\ref{ProposedMethod:Zonotopes:diJi} imply
$
\Gamma_i
\subseteq
W + \cZonotope( d_i, J_i )$.
Then $W + \cZonotope( d_i, J_i ) = \Lambda_i$ by
\ref{ProposedMethod:Zonotopes:m}, \ref{ProposedMethod:Zonotopes:Hi}
and Lemma \ref{lem:Zonotopes}\ref{lem:Zonotopes:Addition},
which proves the first part of our claim. From Lemmata
\ref{lem:OverApproximatingCH} and
\ref{lem:Zonotopes}\ref{lem:Zonotopes:Enclosure}
we additionally obtain the bound
$
d_H( \Gamma_i, \Lambda_i )
\le
\| W \| + \| L - \id \| \| F_{i-1} \|
$.
Next, Lemma \ref{lem:Zonotopes}\ref{lem:Zonotopes:Norm} shows that
$\| F_{i-1} \| \le \| \Omega_{i-1} \|$, and the triangular inequality,
Proposition \ref{prop:OverApproximatingReachableSets}, and the
estimate \ref{e:ExpEstimates} yield $\| L - \id \| \le O(1/N)$.
Finally, by the boundedness of $B$ from assumption
\ref{i:ProblemStatement:Matrices}, and by the fact that
$\| \Omega_i \| \le O(1)$ by Proposition
\ref{prop:OverApproximatingReachableSets} and the compactness of
reachable tubes, we obtain $\| W \| = O(1/N)$, which implies
$d_H( \Gamma_i, \Lambda_i ) = O(1/N)$ as claimed.
\end{proof}

\begin{figure}[!t]
\begin{center}
\includegraphics[width=.49\linewidth,keepaspectratio=true]{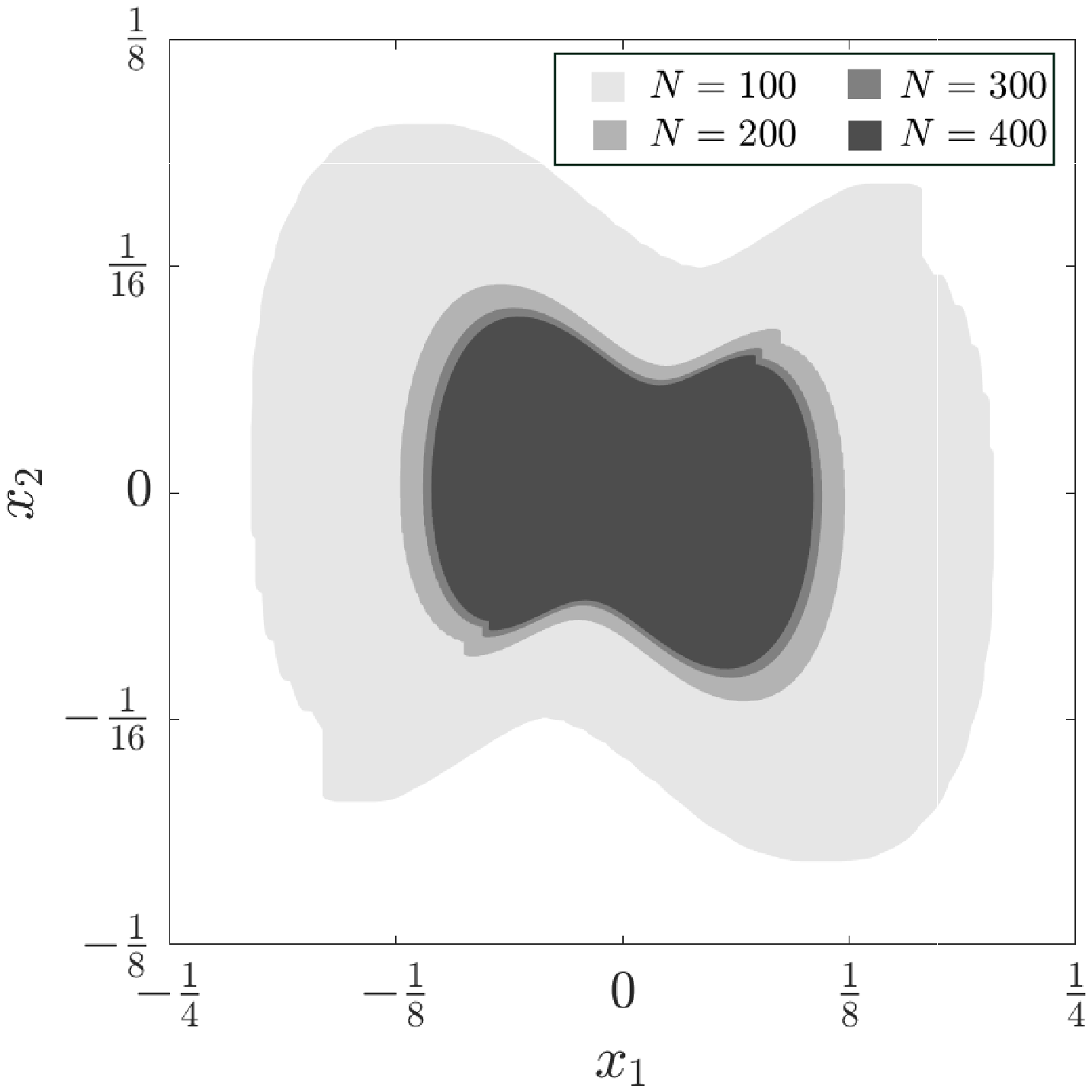}
\includegraphics[width=.49\linewidth,keepaspectratio=true]{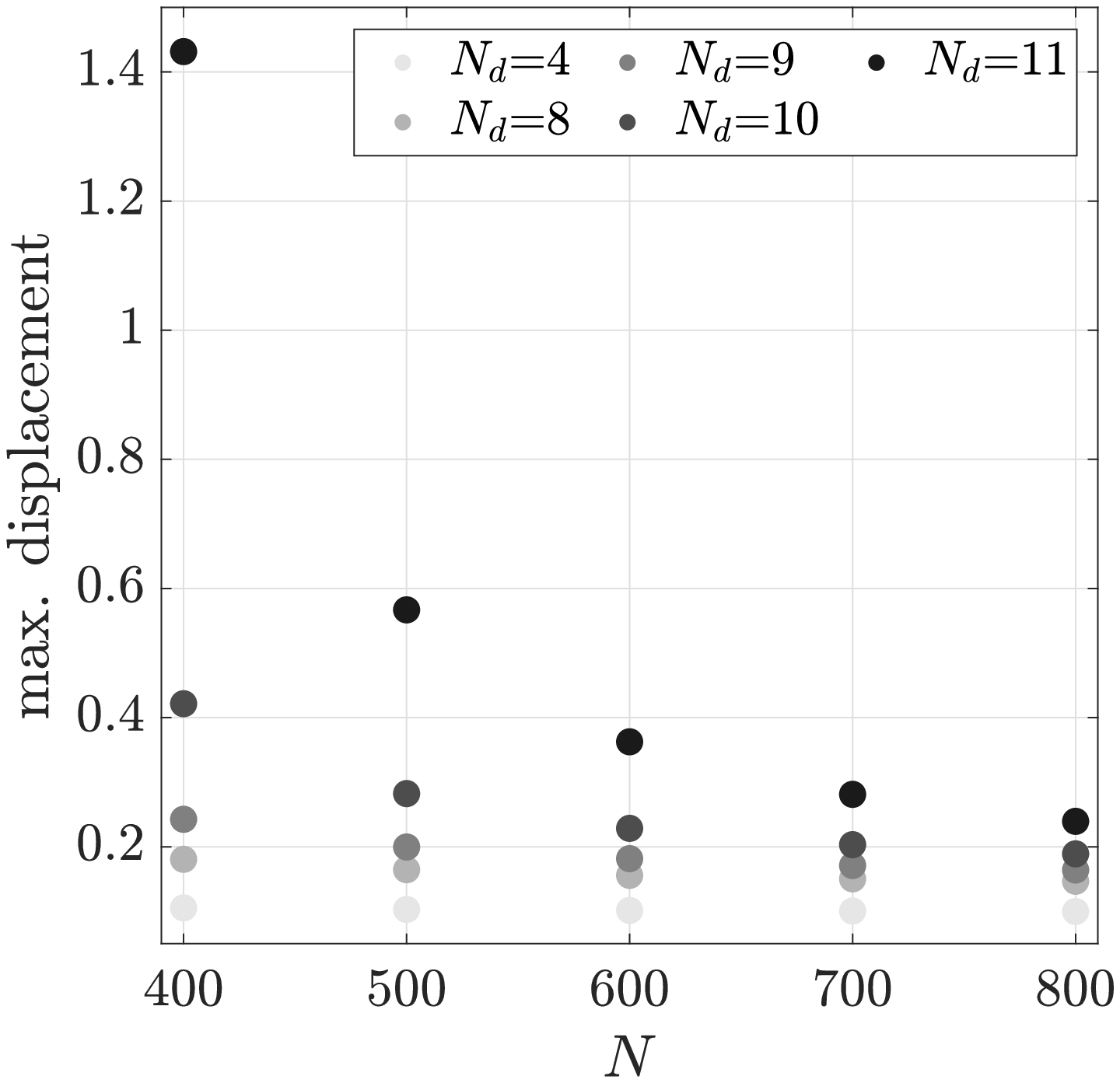}
\end{center}
\caption{\label{fig:ExampleReachableTubeOverApprox}
Zonotopic over-approximations computed by the proposed method for the
example in Section \ref{s:examples}, for selected values of the
discretization parameters $N$ and $N_d$:
Reachable tube $\mathcal{R}(\intcc{t_0,t_f})$ for $N_d=4$ (left), and
maximum bridge displacement, upon all nodal points, obtained from
reachable tube $\mathcal{R}(\intcc{t_0,t_f})$, for
$N_d \in \{ 4,8,9,10,11\}$ (right).}
\end{figure}

To close this section, we discuss the complexity of the proposed
method.
It is easily seen that the memory requirement of algorithm
\ref{ProposedMethod:Zonotopes} is determined by the need
to store the computed zonotopic over-approximation.
The zonotope $\Lambda_i^N$, $i \in \intcc{1;N}$, obtained in Theorem
\ref{th:ZonotopicOverApproximationofRT}, has
$2 p + 1 + ( 2 i - 1 )( q + n )$ generators, and consequently, the
over-approximation $\widehat R_N$ obtained in Corollary
\ref{cor:th:ZonotopicOverApproximationofRT} consists of $N$ zonotopes
in $\mathbb{R}^n$ with a total of $( q + n ) N^2 + ( 2 p + 1 ) N$
generators. In particular, the memory required by the zonotopic
over-approximation, and in turn, the memory required by our method,
is of order $O( n^2 N^2 )$ as one of the variables $n$ or $N$ tends to
infinity and the other one is fixed, where we have assumed both
$p = O(n)$ and $q = O(n)$. Regarding the run time of algorithm
\ref{ProposedMethod:Zonotopes}, we additionally assume $m = O(n)$ and
consider only arithmetic operations. Then the computational effort is
dominated by the multiplication of an $n \times n$ matrix by an
$n \times ( p + ( i - 1 ) ( q + n ) )$ matrix in step
\ref{ProposedMethod:Zonotopes:Fi}. Hence, the run time of algorithm
\ref{ProposedMethod:Zonotopes} is of order $O( n^3 N^2 )$ as one of
the variables $n$ or $N$ tends to infinity and the other one is fixed.

\section{Numerical Example}
\label{s:examples}

\begin{figure}[t]
\begin{center}
\includegraphics[width=.49\linewidth,keepaspectratio=true]{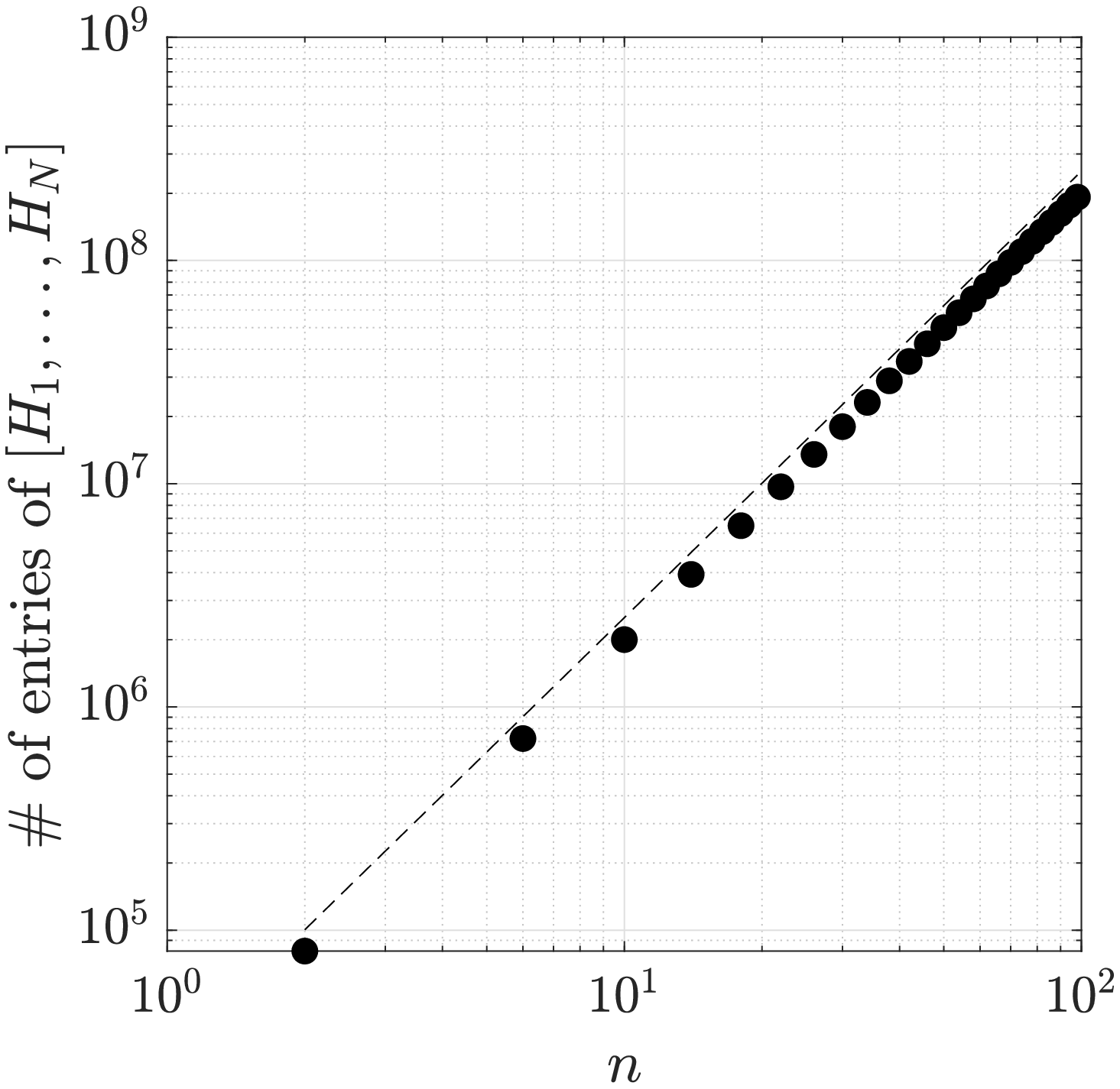}
\includegraphics[width=.49\linewidth,keepaspectratio=true]{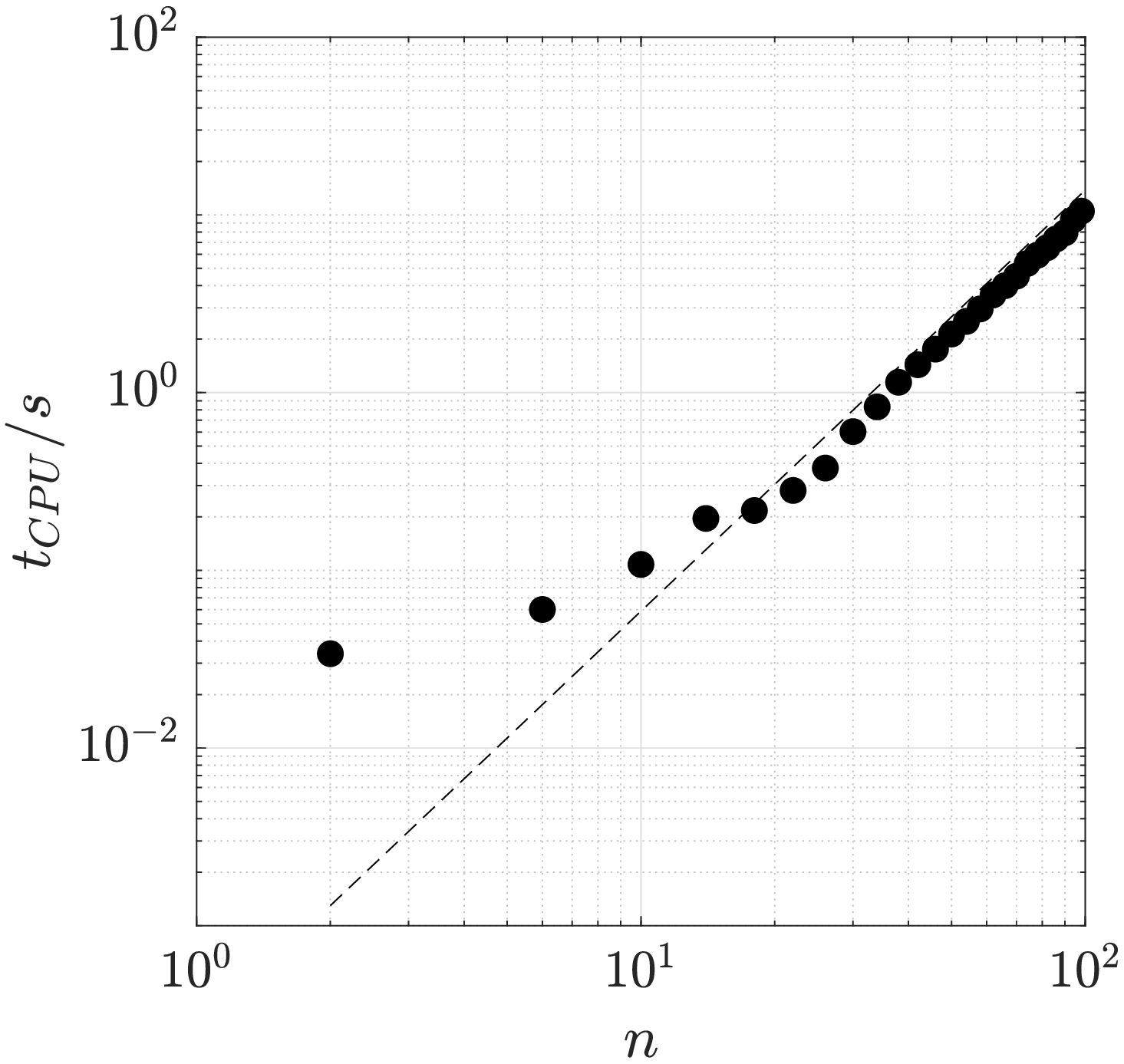}
\end{center}
\caption{\label{fig:ExamplePerformance}
Performance of the proposed method on the example in Section
\ref{s:examples} for the time discretization parameter $N = 100$,
depending on the dimension $n$ of the state space of the system
\ref{eq:LTVsystem}:
Total number of components of the generators of the computed zonotopic
over-approximation (left), and run time (right).}
\end{figure}

In this section, we demonstrate the performance of the proposed method
on a comprehensive numerical example. Specifically, we consider
several instances of a reduced order model, obtained by finite
difference approximation, of an infinite dimensional
pedestrians-footbridge system.

\subsection{Footbridge Model}
\label{ss:examples:FootbridgeModel}

Analyzing the dynamic response of footbridges is crucial for their
structural integrity and the safety of pedestrians
\cite{PiccardoTubino08,Gazzola15}. In this example, we consider a
continuum model of a pedestrians-footbridge system. The pedestrians'
stroll on the footbridge generates dynamic load which triggers
deformation of the footbridge. The lateral displacement of the
footbridge is given by the function
$q\colon \mathcal{I}\times\intcc{0,L}\rightarrow \mathbb{R}$, where
$L$ denotes the length of the bridge, and
$\mathcal{I}=\intcc{t_0,t_f}$ is the compact time interval under
consideration. Let $\mathcal{S}=\mathcal{I}\times \intcc{0,L}$. The
function $q$ satisfies
(see,
e.g.~\cite[eq.~29]{PiccardoTubino08}%
\cite[eqs.~2.91, 2.92]{Gazzola15}%
\cite[eq.~2]{BodgiErlicherArgoul07})
\begin{subequations}
\label{eq:FootbridgeModel}
\begin{align}
\label{eq:PDEmodel}
m D_{1}^{2}q&+EI D_{2}^{4}q+cD_{1}q=f_{p}(t,y),~(t,y)\in \mathcal{S},
\\
\label{eq:FootbridgeModelPDE}
f_{p}(t,y)&=f_{0} \cos (\omega t)q(t,y)+w(t,y),~(t,y)\in \mathcal{S},
\\
\label{eq:FootbridgeModelUncertainty}
\abs{w(t,y)}&\leq \bar{w}, (t,y)\in \mathcal{S},
\\
\label{eq:FootbridgeModelBC}
q(t,0)&=q(t,L)=D_{2}^{2}q(t,0)=D_{2}^{2}q(t,L)=0,~t\in \mathcal{I},
\\
\label{eq:FootbridgeModelIC}
q(t_0,y)&=D_{1}q(t_0,y)=0,~y\in \intcc{0,L},
\end{align}
\end{subequations}
where $D_i^k q$ denotes the $k$th order partial derivative of the map
$q$ w.r.t.~its $i$th argument, $D_1 \defas D_1^1$, $m$ denotes the
mass per unit length, $EI$ is the bending stiffness, $c$ is a damping
coefficient, $f_{p}(t,y)$ is the load per unit length, $f_0$ and
$\omega$ are model parameters, $w(t,y)$ is a bounded uncertain term,
and $\bar{w}\in \mathbb{R}_{+}$ is the bound on the uncertainty.

\subsection{Reduced Order Model}

Now, we deduce a reduced order model from system
\ref{eq:FootbridgeModel} by means of finite difference. Let $N_d$ be
a spatial discretization parameter with
$N_d \ge 4$, and
$h_y\defas L/N_d$,
$y_i \defas i h_y$, $i \in \intcc{0;N_d}$.
By replacing
the $4$th order spatial derivative in \ref{eq:PDEmodel} with second
order centered difference (see, e.g.~\cite{Fornberg88}), and the
second order spacial derivatives in \ref{eq:FootbridgeModelBC} with
first order forward and backward differences, respectively, and
considering the homogeneous boundary and initial conditions in
\ref{eq:FootbridgeModelBC} and \ref{eq:FootbridgeModelIC}, we obtain
the approximating model
$
m\ddot{z}+ K(t)z+c\dot{z}=v(t),
$
where $z(0)=\dot{z}(0)=0$, $v(t)\in \intcc{-\bar{w},\bar{w}}^{N_{d}-3}$, $t\in \intcc{t_0,t_f}$, and $K(\cdot)$ is obtained from the
finite difference approximation. By setting $x = (z, \dot z)$, we
arrive at a problem for system \ref{eq:LTVsystem}, where
\begin{subequations}
\label{eq:PDEmodelReducedSystem}
\begin{align}
\label{eq:PDEmodelReducedMatrixAandInputU}
A(t)&=\left(
\begin{array}{cc}
0& \id\\
-\frac{1}{m}K(t)& -\frac{c}{m}\id
\end{array}
\right)
\text{\ \ and\ \ }
B = \left( \begin{array}{c} 0\\ \id \end{array} \right),
\\
\label{eq:PDEmodelReducedInputSetAndX0} 
U&= \intcc{-\bar{w}/m,\bar{w}/m}^{N_{d}-3},~X_{0}=\{0\}.
\end{align}
\end{subequations} 
The dimension of the system \ref{eq:LTVsystem} is $n=2(N_{d}-3)$.
We note that system \ref{eq:PDEmodelReducedSystem}, for the case
$N_{d}=4$, corresponds to a damped and perturbed version of the
well-known Mathieu equation which models various physical phenomena
and engineering systems; see, e.g.~\cite{FossenNijmeijer11}.
We also note that the approach followed above to obtain the reduced
order linear time-varying (LTV) problem \ref{eq:PDEmodelReducedSystem}
from \ref{eq:FootbridgeModel} has been followed in the literature to
construct benchmark problems for reachability analysis of linear
time-invariant (LTI) systems \cite{TranNguyenJohnson16}.
In this example, we set
$L = 10$,
$m = 2$,
$c = f_0 = EI = \omega = 1$,
$\bar{w} = 0.01$, and
$\intcc{t_0,t_f} = \intcc{0,20}$.
We aim at over-approximating the reachable tube
$\mathcal{R}(\intcc{t_0,t_{f}})$ of \ref{eq:LTVsystem} for several
instances of $N_d$, using the proposed method. The computed
over-approximations will be used to obtain bounds on the bridge
displacements and will be analyzed in terms of accuracy and
computational costs.

\subsection{Implementation}

To address the problem described above, we employ the zonotopic
variant of the proposed method as given in
\ref{ProposedMethod:Zonotopes}. Moreover, we take advantage of the
smoothness of the matrix-valued function $A(\cdot)$ in
\ref{eq:PDEmodelReducedMatrixAandInputU} and use a second order
approximation $\tilde{\phi}$ of the transition matrix as given in
Lemma \ref{lem:SecondOrderTaylorsMethod}. The zonotopic variant is
implemented in MATLAB (2019a), and MATLAB is run on an
AMD Ryzen 5 2500U/ 2GHz processor. Plots of zonotopes are produced
with the help of software CORA \cite{Althoff15}. 
\subsection{Results}

First, we demonstrate the over-approximations obtained by the proposed
method. \ref{fig:ExampleReachableTubeOverApprox} illustrates several
over-approximations of $\mathcal{R}(\intcc{t_0,t_f})$, with $N_{d}=4$.
As seen in the mentioned figure, the accuracy of the
over-approximations increases as the value of $N$ increases, which
matches with the findings of this work. Next, bounds on the
displacement of the bridge are obtained based on several
over-approximations of $\mathcal{R}(\intcc{t_0,t_f})$ for several
instances of $N_{d}$. \ref{fig:ExampleReachableTubeOverApprox} also
illustrates bounds on bridge displacement obtained for several
values of $N_d$ and $N$. The quality of the bounds improves as $N$
increases due to the increased accuracy of the computed
over-approximations.

Finally, the scalability of the proposed method is illustrated by
considering a fixed value of the time discretization parameter
$N$ and selected values of the spatial discretization parameter
$N_d$.
\ref{fig:ExamplePerformance} indicates a memory requirement of order
$O(n^2)$, as predicted by the discussion at the end of Section
\ref{s:MainResults}, and a run time of order $O(n^{2.4})$
approximately, which is less than the predicted $O(n^3)$.
The difference is due to the fact that MATLAB takes advantage of the
sparse structure of the matrix $\widetilde \Phi$ in Lemma
\ref{lem:SecondOrderTaylorsMethod}, inherited from the matrix $A$ in
\ref{eq:PDEmodelReducedMatrixAandInputU}.

\subsection{Comparison: Ellipsoidal Techniques}
\label{ss:Comparison}

\begin{figure}[t]
\begin{center}
\includegraphics[width=.49\linewidth,keepaspectratio=true]{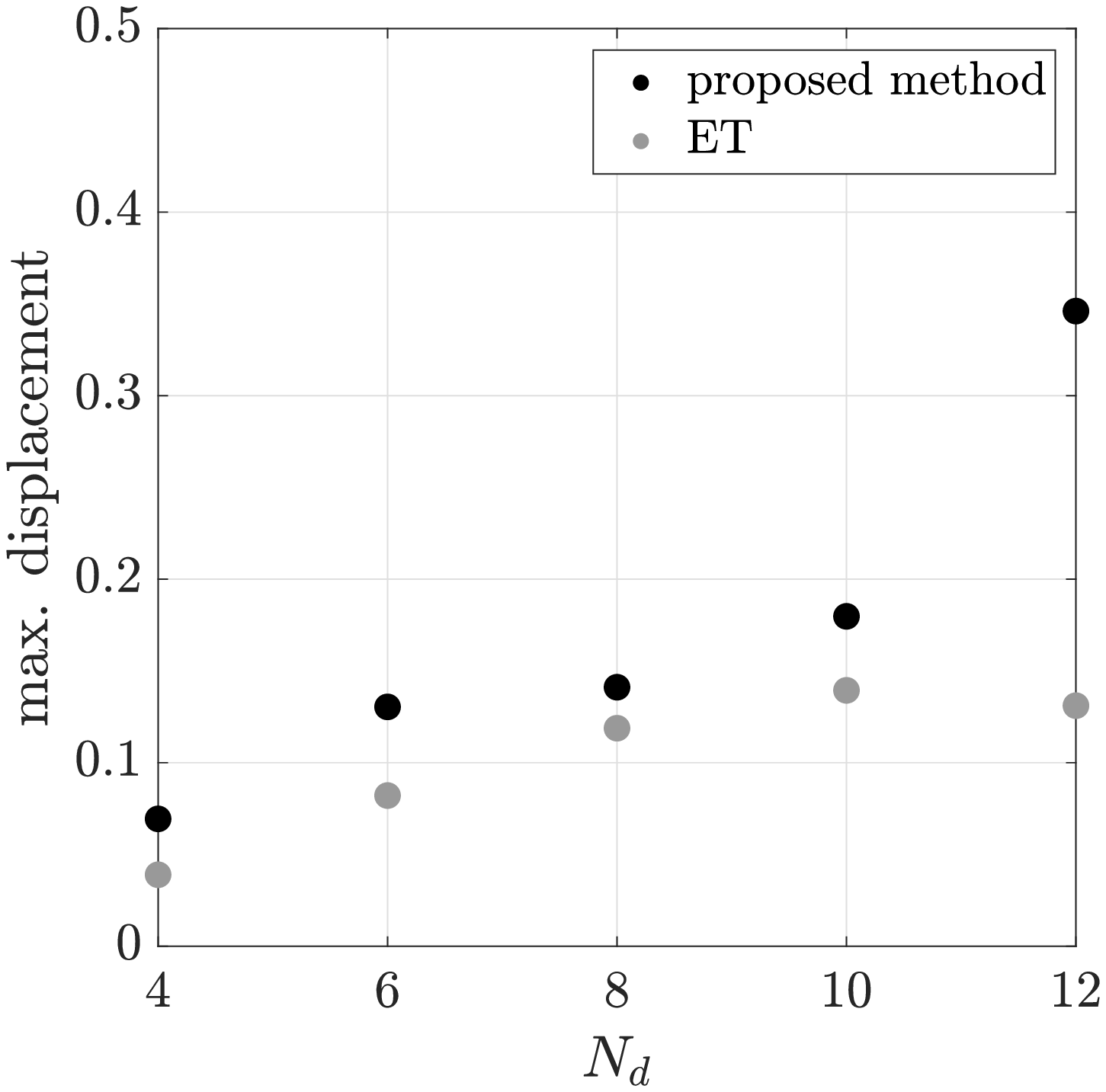}
\includegraphics[width=.49\linewidth,keepaspectratio=true]{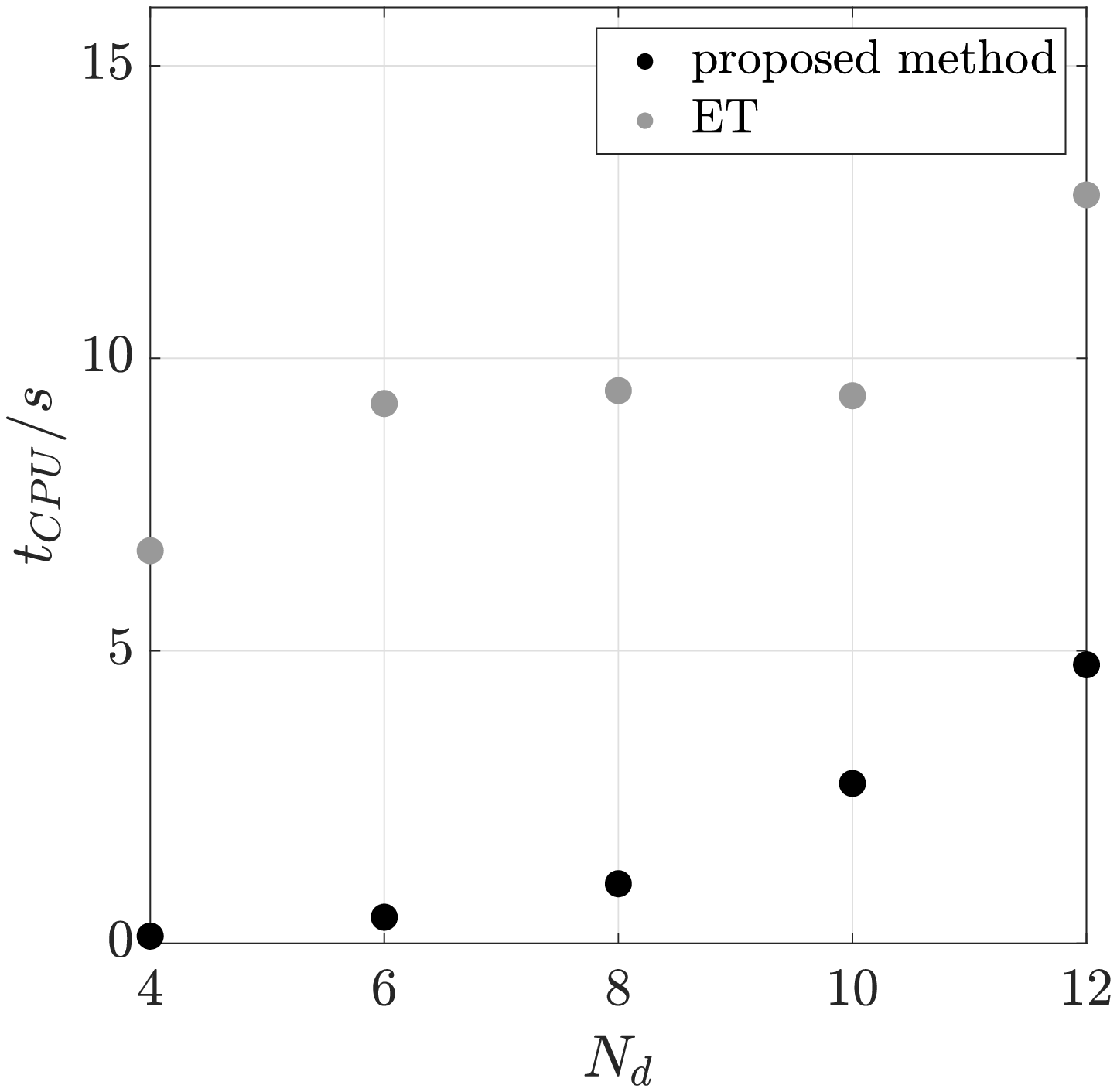}
\end{center}
\caption{\label{fig:Comparison}
Comparison between the zonotopic variant, with $N= 800$, and the ET both applied to the example in Section
\ref{s:examples} for different instances
of the discretization parameter $N_{d}$:
Estimation of maximum bridge displacement at time $t_{f}$, and run time (right).}
\end{figure}

In this subsection, we illustrate the performance of the zonotopic
variant of the proposed method in comparison with ellipsoidal
techniques \cite{KurzhanskiVaraiya14} implemented in the
ellipsoidal toolbox (ET) \cite{GagarinovKurzhanski14}.

As we have discussed in the Introduction, ellipsoidal techniques yield
reachable tubes only in implicit form,
and hence we restrict the scope of the comparison to reachable
sets only.
Here, we consider different instances of system
\ref{eq:PDEmodelReducedSystem} with $n\in \intcc{2;18}$, or
equivalently, $N_d \in \intcc{4;12}$. The ellipsoidal set
$\tilde U \defas ( \bar{w} / m ) \mathbb{B}_2$,
where $\mathbb{B}_2$ is the $2$-norm closed unit ball in
$\mathbb{R}^{N_d - 3}$, is considered as the input set when
applying the ET, as the ET is directly applicable to ellipsoidal
initial and input sets only.
Since $\tilde U \subseteq U$, the ET is given the advantage of using a
smaller input set.
As the sets $B \tilde U$ and $X_0$ are degenerate, the ET
requires defining a regularization parameter, which we have set to be
$10^{-3}$, which introduces full dimensional conservative
substitutes. Moreover, we arbitrarily use the direction vector
$e_1 = (1,0,0,\ldots)$ in our computations of ellipsoidal
approximations. The zonotopic variant is implemented with $N = 800$
and restricted to compute reachable sets only (computations in
equations \ref{ProposedMethod:Zonotopes:diJi} and
\ref{ProposedMethod:Zonotopes:Hi} are omitted). Both techniques
are set to obtain over-approximations of $\mathcal{R}(t_f)$ which
are subsequently used to estimate the maximum bridge displacement,
upon all nodal points, at time $t_f$.

\ref{fig:Comparison} (left) shows that for instances of system
\ref{eq:PDEmodelReducedSystem}, with $N_{d}\in \intcc{4;12}$, the
zonotopic variant performs very well in comparison with the ET in
terms of estimating maximum bridge displacement despite the inherent
disadvantage of using a larger input set and of accounting for
approximation errors which are not considered by the ET.
As seen from \ref{fig:Comparison} (left), the effect of these errors
is more pronounced for increasing state space dimension, which is due
to rapidly growing estimates of matrix norms of the system matrices
and their derivatives (growth is of order $O(n^{4})$ as a
result of the finite difference approximation of the $4$th order
derivative in equation \ref{eq:PDEmodel}).
\ref{fig:Comparison} (right) illustrates that the zonotopic variant
outperforms the ET in terms of computational time for the instances of
system \ref{eq:PDEmodelReducedSystem}, with $N_{d}\in \intcc{4;12}$.
We note, however, that the ET computes additionally
under-approximations of reachable sets, which might contribute to the
relatively higher computational time.

\section{Conclusion}
\label{s:Conclusion}

We have proposed a method to compute over-approximations of reachable
tubes for LTV systems that are additionally represented in a form
suitable for formal verification purposes. The method has been
inspired by existing techniques for LTI systems, and, when applied
to that special case, it is almost equivalent to
those in \cite{Girard05,LeGuernicGirard09}, except that it
additionally requires to repeatedly compute convex hulls.
We have also presented a zonotopic variant of the method and
demonstrated its performance on an example, which indicates that the
computational effort is comparable to that of existing methods
approximating reachable sets rather than tubes.
The accuracy of our method could be improved by
implementing component-wise estimates as in
e.g.~\cite{FrehseLeGuernicDonzeCottonRayLebeltelRipadoGirardDangMaler11},
in place of matrix and vector norms.

\appendix
\label{s:appendix}

\begin{lemma}[Convex Sets]
\label{lem:OperationsOnConvexSets}
Let $\Omega,\Gamma \subseteq \mathbb{R}^n$ be convex,
$\alpha, \beta \in \mathbb{R}$, and
$L \in \mathbb{R}^{m \times n}$. Then the following holds.
\begin{enumerate}
\item
\label{lem:OperationsOnConvexSets:ConvexCompact}
The sets $\alpha \Omega$, $\Omega + \Gamma$ and $L \Omega$ are convex.
They are additionally compact if $\Omega$ and $\Gamma$ are so.
\item
\label{lem:OperationsOnConvexSets:CHOfUnionOfConvexSets}
If $\Omega$ and $\Gamma$ are additionally non-empty, then
$
\conv( \Omega \cup \Gamma )
=
\Set{ \lambda x + ( 1 - \lambda ) y }{ x \in \Omega, y \in \Gamma, \lambda \in \intcc{0,1}}
$.
\qedhere
\end{enumerate}
\end{lemma}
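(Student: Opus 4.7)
The plan is to prove the two parts directly from definitions, as both are standard facts of convex analysis; I do not expect any real obstacle.

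For part \ref{lem:OperationsOnConvexSets:ConvexCompact}, I would verify convexity of each of $\alpha\Omega$, $\Omega+\Gamma$, and $L\Omega$ by the usual two-point test. For instance, given $\alpha x, \alpha y \in \alpha\Omega$ with $x, y \in \Omega$ and $\mu \in \intcc{0,1}$, I would rewrite $\mu (\alpha x) + (1-\mu)(\alpha y) = \alpha( \mu x + (1-\mu) y )$ and invoke convexity of $\Omega$; the arguments for $\Omega + \Gamma$ and $L\Omega$ are entirely analogous. Compactness under the additional hypothesis follows from the fact that each of these sets is the continuous image of a compact set: $\alpha\Omega$ and $L\Omega$ are images of $\Omega$ under scalar multiplication and the linear map $L$, respectively, while $\Omega + \Gamma$ is the image of the compact product $\Omega \times \Gamma$ under the continuous addition map $(x,y)\mapsto x+y$.

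For part \ref{lem:OperationsOnConvexSets:CHOfUnionOfConvexSets}, denote the right-hand side by $S$. The inclusion $S \subseteq \conv( \Omega \cup \Gamma )$ is immediate since every element of $S$ is, by definition, a convex combination of an element of $\Omega$ and an element of $\Gamma$, both lying in $\Omega \cup \Gamma$. For the reverse inclusion, I would first observe $\Omega \cup \Gamma \subseteq S$ by choosing $\lambda \in \{0,1\}$ (here the non-emptiness hypothesis is essential, as it allows us to pick the ``dummy'' point in the other set), and then show that $S$ is itself convex, which gives $\conv( \Omega \cup \Gamma ) \subseteq S$ automatically.

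The convexity of $S$ is the only slightly non-trivial step. Given two points $z_k = \lambda_k x_k + (1 - \lambda_k) y_k \in S$ for $k \in \{1,2\}$ and $\mu \in \intcc{0,1}$, I would expand
\begin{equation*}
\mu z_1 + (1-\mu) z_2 = \mu \lambda_1 x_1 + (1-\mu) \lambda_2 x_2 + \mu(1-\lambda_1) y_1 + (1-\mu)(1-\lambda_2) y_2,
\end{equation*}
set $\lambda = \mu \lambda_1 + (1-\mu)\lambda_2 \in \intcc{0,1}$, and, assuming $\lambda \in \intoo{0,1}$, rewrite the $x$-part as $\lambda \cdot x'$ with $x' \in \Omega$ obtained as a convex combination of $x_1, x_2$ (using convexity of $\Omega$), and likewise the $y$-part as $(1-\lambda) \cdot y'$ with $y' \in \Gamma$. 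The boundary cases $\lambda \in \{0,1\}$ are handled separately and land in $\Gamma$ or $\Omega$ respectively, which, via the first step, is contained in $S$. This completes the proof.
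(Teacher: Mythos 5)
Your proof is correct. The paper gives essentially no details here --- it cites Rockafellar (Sect.~3 and Th.~3.3) for the two parts and adds the same observation you make, that continuous images of compact sets are compact --- so your argument is simply a fully written-out version of the standard facts the paper outsources; in particular, your verification that the set of two-point convex combinations is convex (with the careful handling of the boundary cases $\lambda \in \{0,1\}$ and the correct use of non-emptiness) is precisely the content of the cited Theorem~3.3.
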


\begin{proof}
For \ref{lem:OperationsOnConvexSets:ConvexCompact}, see
\cite[Sect.~3]{Rockafellar70}
and note that images of compact sets under
continuous maps are compact.
For \ref{lem:OperationsOnConvexSets:CHOfUnionOfConvexSets}, see
\cite[Th.~3.3]{Rockafellar70}.
\end{proof}
The \concept{Hausdorff distance} $d_H(\Omega,\Gamma)$ of two non-empty
bounded subsets $\Omega, \Gamma \subseteq \mathbb{R}^n$
w.r.t.~$\| \cdot \|$ is defined by
\[
d_H(\Omega,\Gamma)
=
\inf
\Set{ \varepsilon > 0}{%
\Omega \subseteq \Gamma + \varepsilon \mathbb{B},
\Gamma \subseteq \Omega + \varepsilon \mathbb{B}
},
\]
and is used to measure the extent by which the two sets $\Omega$ and
$\Gamma$ differ from each other. This distance satisfies the triangle
inequality, it is a metric when restricted to non-empty compact
subsets of $\mathbb{R}^n$ \cite{DeBlasi76}, and it additionally enjoys
the properties in the following lemma.

\begin{lemma}[Hausdorff Distance]
\label{lem:HausdorffDistance}
Let $\Omega, \Omega', \Gamma, \Gamma' \subseteq \mathbb{R}^n$ be
non-empty and bounded, and let $A,B \in \mathbb{R}^{m \times n}$ and
$\delta, \varepsilon \in \mathbb{R}_{+}$.
Then the following holds:

\begin{enumerate}
\item
\label{lem:HausdorffDistance:1}
$d_H( \Omega + \Gamma, \Omega' + \Gamma' )
\le
d_H( \Omega, \Omega' ) + d_H( \Gamma, \Gamma' )$.
\item
\label{lem:HausdorffDistance:2}
$
d_H( A \Omega, A \Gamma )
\le
\| A \| d_H( \Omega, \Gamma )
$.
\item
\label{lem:HausdorffDistance:3}
$
d_H( A \Omega, B \Omega )
\le
\| A-B \| \| \Omega \|$.
\item
\label{lem:HausdorffDistance:4}
$\| \Omega \| = d_H( \Omega, \{ 0 \} )$.
\item
\label{lem:HausdorffDistance:5}
If $\Omega$ and $\Gamma$ are additionally closed, then
$d_H( \Omega, \Gamma ) \le \varepsilon$
iff
$\Omega \subseteq \Gamma + \varepsilon \mathbb{B}$
and
$\Gamma \subseteq \Omega + \varepsilon \mathbb{B}$.
\item
\label{lem:HausdorffDistance:6}
If $d_H( \Omega, \Gamma ) \le \varepsilon$, then
$
d_H( \Omega, \Gamma + \delta \mathbb{B} )
\le
\varepsilon + \delta
$.
\item
\label{lem:HausdorffDistance:Union}
Let
$(\Omega_i)_{i\in I}$ and
$(\Gamma_i)_{i\in I}$ be families of non-empty subsets of
$\mathbb{R}^n$. Then
$
d_H
\left(
\cup_{i \in I} \Omega_i,
\cup_{i \in I} \Gamma_i
\right)
\le
\sup_{i \in I} d_H( \Omega_i, \Gamma_i )
$.
\qedhere
\end{enumerate}
\end{lemma}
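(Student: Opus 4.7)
My plan is to verify items \ref{lem:HausdorffDistance:1}--\ref{lem:HausdorffDistance:Union} independently, working directly from the definition
$d_H(\Omega,\Gamma) = \inf\Set{\varepsilon > 0}{\Omega \subseteq \Gamma + \varepsilon \mathbb{B}, \Gamma \subseteq \Omega + \varepsilon\mathbb{B}}$.
The unifying technique throughout is to select $\varepsilon' > d_H(\Omega,\Gamma)$, translate the definition into the pair of set inclusions, manipulate them by Minkowski-sum or image-under-$A$ operations as dictated by the statement, and then pass $\varepsilon'$ down to $d_H(\Omega,\Gamma)$. This device reduces each item to a purely algebraic identity on Minkowski sums or on images under linear maps.

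I would first dispatch \ref{lem:HausdorffDistance:5}, as it cleans up several other items. The ``if'' direction is immediate from the definition of an infimum. For ``only if'', the definition yields $\Omega \subseteq \Gamma + (\varepsilon + 1/n)\mathbb{B}$ for each $n \in \mathbb{N}$; since $\Gamma$ is closed and $\mathbb{B}$ is compact, $\Gamma + \varepsilon \mathbb{B}$ is closed, and a short compactness argument shows $\bigcap_{n}(\Gamma + (\varepsilon + 1/n)\mathbb{B}) = \Gamma + \varepsilon \mathbb{B}$, from which the desired inclusion follows; the symmetric inclusion follows by swapping the roles of $\Omega$ and $\Gamma$. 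Item \ref{lem:HausdorffDistance:4} then reads off the definition: for non-empty $\Omega$, the inclusion $\{0\} \subseteq \Omega + \varepsilon \mathbb{B}$ is implied by $\Omega \subseteq \varepsilon\mathbb{B}$, so the infimum collapses to $\sup_{x \in \Omega}\|x\| = \|\Omega\|$.

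For the remaining items the manipulations are routine. For \ref{lem:HausdorffDistance:1}, pick $\varepsilon' > d_H(\Omega,\Omega')$ and $\delta' > d_H(\Gamma,\Gamma')$; Minkowski-summing the two pairs of inclusions yields $\Omega + \Gamma \subseteq \Omega' + \Gamma' + (\varepsilon'+\delta')\mathbb{B}$ and its symmetric analogue, and infimizing gives the claim. For \ref{lem:HausdorffDistance:2}, apply $A$ to both sides of $\Omega \subseteq \Gamma + \varepsilon'\mathbb{B}$ and use $A\mathbb{B} \subseteq \|A\|\mathbb{B}$, which is the defining property of the induced operator norm. For \ref{lem:HausdorffDistance:3}, write $Ay = By + (A-B)y$ and bound $\|(A-B)y\| \le \|A-B\|\cdot\|\Omega\|$ uniformly in $y \in \Omega$, giving $A\Omega \subseteq B\Omega + \|A-B\|\cdot\|\Omega\|\mathbb{B}$; symmetry in $A,B$ yields the reverse. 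For \ref{lem:HausdorffDistance:6}, use the chain $\Omega \subseteq \Gamma + \varepsilon\mathbb{B} \subseteq (\Gamma+\delta\mathbb{B}) + \varepsilon\mathbb{B}$, exploiting $0 \in \delta\mathbb{B}$, together with $\Gamma + \delta\mathbb{B} \subseteq \Omega + \varepsilon\mathbb{B} + \delta\mathbb{B} = \Omega + (\varepsilon+\delta)\mathbb{B}$, and invoke \ref{lem:HausdorffDistance:5}. Finally, for \ref{lem:HausdorffDistance:Union}, setting $M = \sup_i d_H(\Omega_i,\Gamma_i)$, the inclusions $\Omega_i \subseteq \Gamma_i + (M+1/n)\mathbb{B}$ may be unioned over $i$ via the distributivity $\bigcup_i (S_i + T) = (\bigcup_i S_i) + T$ to yield $\bigcup_i\Omega_i \subseteq \bigcup_i\Gamma_i + (M+1/n)\mathbb{B}$, and letting $n \to \infty$ gives the bound.

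The only step with genuine substance is the limit passage in \ref{lem:HausdorffDistance:5}, which essentially uses closedness of $\Gamma$ together with compactness of $\mathbb{B}$ (without closedness the infimum in the definition of $d_H$ need not be attained). Every other item reduces, via the $\varepsilon + 1/n$ device or directly, to an algebraic manipulation of Minkowski sums and linear images and involves no analytic subtlety.
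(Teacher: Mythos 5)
Your proof is correct, and it is more self-contained than the paper's: the paper disposes of items \ref{lem:HausdorffDistance:1}, \ref{lem:HausdorffDistance:3} and \ref{lem:HausdorffDistance:5} by citation (to De~Blasi, Baier and Schneider), obtains \ref{lem:HausdorffDistance:6} as a one-line consequence of \ref{lem:HausdorffDistance:1} and \ref{lem:HausdorffDistance:4} (writing $\Omega=\Omega+\{0\}$ and using $d_H(\{0\},\delta\mathbb{B})=\delta$), and only writes out the argument for \ref{lem:HausdorffDistance:Union} --- which coincides with yours, strict bound $\varepsilon$ plus distributivity of Minkowski sum over unions. What your route buys is a complete elementary derivation from the definition, including the only analytically nontrivial point, namely that the infimum in \ref{lem:HausdorffDistance:5} is attained for closed (hence compact) sets via $\bigcap_n(\Gamma+(\varepsilon+1/n)\mathbb{B})=\Gamma+\varepsilon\mathbb{B}$; the paper's route buys brevity and the slick reduction of \ref{lem:HausdorffDistance:6}. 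One small slip: in \ref{lem:HausdorffDistance:6} you invoke \ref{lem:HausdorffDistance:5}, but closedness of $\Omega$ and $\Gamma$ is not assumed there, so that appeal is not licensed as written. The fix is immediate and already in your toolkit: run the same chain of inclusions with $\varepsilon+1/n$ in place of $\varepsilon$ (or with any $\varepsilon'>d_H(\Omega,\Gamma)$) and let $n\to\infty$, or else derive it as the paper does from \ref{lem:HausdorffDistance:1} and \ref{lem:HausdorffDistance:4}.
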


\begin{proof}
For \ref{lem:HausdorffDistance:1}, \ref{lem:HausdorffDistance:3} and
\ref{lem:HausdorffDistance:5}, see \cite[Lemma 2.2]{DeBlasi76},
\cite[Lemma 0.1.2.7]{Baier95},
and the discussion in \cite[p.~48]{Schneider93}.
The definition of $d_H$ directly implies
\ref{lem:HausdorffDistance:2} and \ref{lem:HausdorffDistance:4}, and
\ref{lem:HausdorffDistance:1} and \ref{lem:HausdorffDistance:4} imply
\ref{lem:HausdorffDistance:6}.
To prove \ref{lem:HausdorffDistance:Union}, we may assume
that
$
\sup_{i \in I} d_H( \Omega_i, \Gamma_i )
<
\varepsilon
$
for some real $\varepsilon$.
Then $d_H(\Omega_i,\Gamma_i) < \varepsilon$ for every $i \in I$, and
in turn
$
\Omega_i
\subseteq
\varepsilon \mathbb{B} + \Gamma_i
$.
It follows that
$
\cup_{i \in I} \Omega_i
\subseteq
\varepsilon \mathbb{B} + \cup_{i \in I} \Gamma_i
$,
and similarly with the roles of $\Omega_i$ and $\Gamma_i$
interchanged. Hence,
$
d_H
\left(
\cup_{i \in I} \Omega_i,
\cup_{i \in I} \Gamma_i
\right)
\le
\varepsilon
$,
and since the bound $\varepsilon$ was
arbitrary, we are done.
\end{proof}

\noindent
In the following result, given a norm $\| \cdot \|$ on
$\mathbb{R}^n$, it is assumed that the norm of any matrix
$A \in \mathbb{R}^{n \times p}$ is w.r.t.~the maximum norm on
$\mathbb{R}^p$, i.e.,
$
\| A \|
=
\sup \Set{ \| A x \| }{ \| x \|_{\infty} \le 1 }
$.

\begin{lemma}[Zonotopes]
\label{lem:Zonotopes}
Let $b, c \in \mathbb{R}^n$,
$F \in \mathbb{R}^{n \times p}$,
$G \in \mathbb{R}^{n \times q}$,
and $L \in \mathbb{R}^{m \times n}$, and denote
$\Omega = \cZonotope(b,F)$,
$\Gamma = \cZonotope(c,G)$.
Then the following holds.
\begin{enumerate}
\item
\label{lem:Zonotopes:Addition}
$
\Omega + \Gamma
=
\cZonotope( b + c, (F,G) )
$,
where $(F,G) \in \mathbb{R}^{n \times (p+q)}$.
\item
\label{lem:Zonotopes:LinearTransformation}
$L \Gamma = \cZonotope(L c,L G)$.
\item
\label{lem:Zonotopes:Norm}
$
\| \Gamma \|
=
\| (c, G) \|$,
where $(c, G) \in \mathbb{R}^{n \times (q+1)}$.
In particular,
$
\| \Gamma \|_{\infty}
=
\max \Set{ |c_i| + \sum_{j=1}^q | G_{i,j} | }{ i \in \intcc{1;n} }
$.
\item
\label{lem:Zonotopes:Enclosure}
If $p = q$, then
$
\conv( \Omega \cup \Gamma )
\subseteq
\cZonotope
\left(
\Enclosure((b,F),(c,G))
\right)
$,
where the operator $\Enclosure$ is defined in
\ref{def:ZonotopeEnclosure:e}.
Moreover, the Hausdorff distance between the two sets does not exceed
$\| F - G \|$. In particular, for the Hausdorff distance w.r.t.~the
maximum norm, the bound equals
$
\max
\Menge{ \sum_{j=1}^q | F_{i,j} - G_{i,j} | }{ i \in \intcc{1;n} }
$.
\qedhere
\end{enumerate}
\end{lemma}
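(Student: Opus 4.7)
The plan is to prove parts (i)--(iii) by direct unfolding of the definitions, and then to address part (iv) in two stages: an algebraic identity to establish the inclusion, followed by a single-point comparison to bound the Hausdorff distance.

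For (i), I would expand the Minkowski sum as $\Omega + \Gamma = \{(b + Fu) + (c + Gv) : u \in \intcc{-1,1}^p,\, v \in \intcc{-1,1}^q\}$, regroup it as $b + c + (F,G)(u,v)$, and observe that the tuple $(u,v)$ ranges exactly over $\intcc{-1,1}^{p+q}$. Part (ii) is immediate from $L(c + Gv) = Lc + (LG) v$.

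For (iii), the nontrivial identity is $\sup_{|z_0| \le 1,\, \|z'\|_\infty \le 1}\|c z_0 + G z'\| = \sup_{\|z'\|_\infty \le 1} \|c + G z'\|$. The bound $\ge$ is clear by taking $z_0 = 1$. For the reverse, I would note that, for fixed $z'$, the supremum over $|z_0| \le 1$ is attained at $z_0 = \pm 1$ by convexity and equals $\max(\|c + Gz'\|, \|c - Gz'\|)$, and that the symmetry $z' \mapsto -z'$ of $\intcc{-1,1}^q$ makes the suprema of both terms over $z'$ coincide. The explicit formula for $\|\cdot\|_\infty$ then follows by maximizing each component $|c_i + \sum_j G_{ij} z'_j|$ over $z'_j \in \intcc{-1,1}$, the maximum $|c_i| + \sum_j |G_{ij}|$ being achieved by aligning the signs of the $z'_j$ with those of $c_i G_{ij}$.

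For (iv), the inclusion rests on the algebraic identity
\begin{equation*}
\lambda(b + Fu) + (1-\lambda)(c + Gv) = \frac{b+c}{2} + \frac{b-c}{2}\beta + \frac{F+G}{2}\alpha_1 + \frac{F-G}{2}\alpha_2,
\end{equation*}
where $\beta = 2\lambda - 1 \in \intcc{-1,1}$, $\alpha_1 = \lambda u + (1-\lambda) v$ and $\alpha_2 = \lambda u - (1-\lambda) v$. The triangle inequality yields $\|\alpha_1\|_\infty, \|\alpha_2\|_\infty \le 1$ whenever $u, v \in \intcc{-1,1}^p$, so invoking Lemma \ref{lem:OperationsOnConvexSets}\ref{lem:OperationsOnConvexSets:CHOfUnionOfConvexSets} one concludes that a generic element of $\conv(\Omega \cup \Gamma)$ lies in $\cZonotope(\Enclosure((b,F),(c,G)))$.

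The main obstacle is the Hausdorff bound, because a naive inversion of the above identity to solve for $u, v$ in terms of $\alpha_1, \alpha_2$ can force $u, v$ outside $\intcc{-1,1}^p$. My strategy is to accept a controlled error: given any $z = \frac{b+c}{2} + \frac{b-c}{2}\beta + \frac{F+G}{2}\alpha_1 + \frac{F-G}{2}\alpha_2$ in the enclosure zonotope, I would set $\lambda = (\beta+1)/2$ and take the suboptimal choice $u = v = \alpha_1$. The resulting point $z' = \lambda(b + F\alpha_1) + (1-\lambda)(c + G\alpha_1) \in \conv(\Omega \cup \Gamma)$ satisfies, after a short expansion using $2\lambda - 1 = \beta$,
\begin{equation*}
z - z' = \frac{F-G}{2}\bigl(\alpha_2 - \beta\,\alpha_1\bigr).
\end{equation*}
Since $\|\alpha_2 - \beta\,\alpha_1\|_\infty \le \|\alpha_2\|_\infty + |\beta|\,\|\alpha_1\|_\infty \le 2$, one obtains $\|z - z'\| \le \|F - G\|$, which combined with the inclusion yields $d_H(\conv(\Omega \cup \Gamma), \cZonotope(\Enclosure((b,F),(c,G)))) \le \|F - G\|$. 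The specialization to the maximum norm is then a standard computation, since $\|F-G\|$ viewed as an operator norm from $\|\cdot\|_\infty$ to $\|\cdot\|_\infty$ equals the maximum absolute row sum $\max_i \sum_j |F_{ij} - G_{ij}|$.
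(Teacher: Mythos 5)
Your proposal is correct and follows essentially the same route as the paper: part (iii) via the same convexity/symmetry argument reducing $\|cz_0+Gz'\|$ to $\max(\|c+Gz'\|,\|c-Gz'\|)$, and part (iv) via the same key device of matching a point of the enclosure zonotope with the point of $\conv(\Omega\cup\Gamma)$ obtained by reusing the $(F+G)$-coefficient $\alpha_1$ for both $u$ and $v$, leaving the error $\tfrac{1}{2}(F-G)(\alpha_2-\beta\alpha_1)$ bounded by $\|F-G\|$. The only cosmetic difference is that the paper establishes the inclusion by showing $\Omega\subseteq E$ and $\Gamma\subseteq E$ and invoking convexity of $E$ (and cites a reference for (i)--(ii)), whereas you parametrize $\conv(\Omega\cup\Gamma)$ directly; both are equivalent in substance.
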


\begin{proof}
For \ref{lem:Zonotopes:Addition} and
\ref{lem:Zonotopes:LinearTransformation}, see
\cite[Prop.~1.4, 1.5]{LeStoicaAlamoCamachoDumur13}.
To prove \ref{lem:Zonotopes:Norm}, note that
$\| \cZonotope(0,(c,G)) \| = \| (c,G) \|$
by the very definition \ref{def:Zonotopes:e} of
$\cZonotope$, and that $\Gamma \subseteq \cZonotope(0,(c,G))$. It remains to show that
$\| \cZonotope(0,(c,G)) \| \le \| \Gamma \|$. To this end, let
$p = \alpha c + y$ for some $\alpha \in \intcc{-1,1}$ and
$y \in G \intcc{-1,1}^q$. Then
$p = \lambda (c+y) + ( 1 - \lambda ) (y-c)$ for
$\lambda = ( 1 + \alpha ) / 2 \in \intcc{0,1}$, and so
$\| p \| \le \max \{ \| c + y \|, \| c - y \| \}$ as $\| \cdot \|$ is
convex. It follows that $\| p \| \le \| \Gamma \|$ since
$c+y,c-y \in \Gamma$.
The set inclusion claim in \ref{lem:Zonotopes:Enclosure} is known
\cite{Girard05}; we sketch a proof: Denote
$E = \cZonotope ( \Enclosure( (b,F), (c,G) ) )$ and
let $x = b + F \lambda$ for some $\lambda \in \intcc{-1,1}^p$. Then
$x = (b+c)/2 + (b-c)/2 + (F+G) \lambda /2 + (F-G) \lambda /2 \in E$.
This shows that $\Omega \subseteq E$, and similarly we obtain
$\Gamma \subseteq E$. The claim follows as $E$ is convex.
To prove the estimate, which improves Girard's result \cite{Girard05},
let $x \in E$. Then
$
2 x
=
b + c
+
\alpha ( b - c )
+
( F + G ) \mu
+
( F - G ) \nu
$
for some $\alpha \in \intcc{-1,1}$ and
some $\mu, \nu \in \intcc{-1,1}^p$. Define
$
y
=
\lambda ( c + G \mu )
+ ( 1 - \lambda ) ( b + F \mu )
$
for $\lambda = ( 1 - \alpha ) / 2 \in \intcc{0,1}$.
Then $y \in \conv( \Omega \cup \Gamma )$ by Lemma
\ref{lem:OperationsOnConvexSets}\ref{lem:OperationsOnConvexSets:CHOfUnionOfConvexSets},
and
$
x - y
=
( F - G ) ( \nu - \alpha \mu ) / 2
$.
Since $\| \nu - \alpha \mu \|_{\infty} \le 2$, we arrive at
$\| x - y \| \le \| F - G \|$, which proves the bound.
\end{proof}

\begin{lemma}[Taylor's method of order $\mathbf 2$]
\label{lem:SecondOrderTaylorsMethod}
Suppose that hypotheses
\ref{i:ProblemStatement:Time}-\ref{i:ProblemStatement:Matrices} in
Section \ref{s:ProblemStatement} hold.
Additionally assume that $A$ is of class $C^2$ and that
$\| \ddot A (t) \| \le M_{\ddot A}$ holds for all
$t \in \intcc{t_0,t_{f}}$.
Define
$\widetilde{\phi} \colon D \to \mathbb{R}^{n \times n}$ by
$
\widetilde{\phi} (t,s)
=
\id + (t-s)A(s) + (t-s)^2 (\dot A(s) + A(s)^2)/2$.
Then condition \ref{i:ProblemStatement:ApproxPHI} in Section
\ref{s:ProblemStatement} holds with
$\theta$ given
by $\theta(h) = (1 + 3 M_{\dot A} / M_A^2 + M_{\ddot A} / M_A^3) ( \exp(h M_A) - h^2 M_A^2 / 2 - h M_A - 1 )$.
\end{lemma}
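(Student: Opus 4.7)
The plan is to apply Taylor's theorem with integral remainder to the map $h \mapsto \phi(s+h,s)$ and to bound the remainder using the standing hypotheses together with estimate \ref{e:ExpEstimates}. The three requirements to verify are the pointwise bound on $\|\phi(t,s) - \widetilde{\phi}(t,s)\|$, the asymptotic estimate $\theta(h) = O(h^2)$ as $h \to 0$, and the monotonicity of $\theta$.

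For the pointwise bound, fix $s \in \intcc{t_0,t_f}$ and set $\Psi(h) = \phi(s+h,s)$ for $h \in \intcc{0,t_f-s}$. Using the identity $D_1 \phi(t,s) = A(t) \phi(t,s)$ together with the assumption that $A \in C^2$, I would compute successive derivatives: $\Psi'(h) = A(s+h) \Psi(h)$, $\Psi''(h) = (\dot A(s+h) + A(s+h)^2) \Psi(h)$, and $\Psi'''(h) = (\ddot A + 2 \dot A A + A \dot A + A^3)(s+h)\, \Psi(h)$. The slightly delicate point here, and the only place where care is needed, is that $(A^2)' = \dot A A + A \dot A$ rather than $2 A \dot A$, since $A$ and $\dot A$ need not commute. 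Evaluating at $h=0$ yields $\Psi(0) = \id$, $\Psi'(0) = A(s)$ and $\Psi''(0) = \dot A(s) + A(s)^2$, so the degree-$2$ Taylor polynomial of $\Psi$ at $0$ is exactly $\widetilde{\phi}(s+h,s)$.

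Next, I would invoke the integral remainder form of Taylor's theorem,
\[
\Psi(h) - \widetilde{\phi}(s+h,s)
=
\int_0^h \frac{(h-r)^2}{2} \Psi'''(r)\, dr,
\]
and bound the integrand using $\|\Psi'''(r)\| \le (M_{\ddot A} + 3 M_{\dot A} M_A + M_A^3) \e^{r M_A}$, where the norm estimate on $\Psi$ comes from \ref{e:ExpEstimates}. A routine double integration by parts then gives
\[
\int_0^h \frac{(h-r)^2}{2} \e^{r M_A}\, dr
= \frac{1}{M_A^3}\bigl(\e^{h M_A} - 1 - h M_A - h^2 M_A^2/2\bigr),
\]
and factoring $M_A^3$ into the prefactor yields exactly $\theta(h)$. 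Substituting $h = t-s$ gives \ref{i:ProblemStatement:ApproxPHI:LocalError}.

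Finally, the asymptotic and monotonicity properties are immediate. Expanding $\e^{h M_A}$ as a power series shows that $\e^{h M_A} - 1 - h M_A - h^2 M_A^2 / 2 = \sum_{k \ge 3} (h M_A)^k / k!$, which is $O(h^3)$ and hence $O(h^2)$ as $h \to 0$, establishing \ref{i:ProblemStatement:ApproxPHI:LocalErrorOfOrder2}. For monotonicity, the derivative of $h \mapsto \e^{h M_A} - h^2 M_A^2/2 - h M_A - 1$ equals $M_A(\e^{h M_A} - h M_A - 1) \ge 0$ for $h \ge 0$, so $\theta$ is non-decreasing on $\mathbb{R}_+$. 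I expect the only genuine obstacle to be the bookkeeping in the computation of $\Psi'''$ (noncommutativity of $A$ and $\dot A$) and the evaluation of the remainder integral; everything else is a direct application of results already stated.
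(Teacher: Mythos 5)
Your proof is correct and follows essentially the same route as the paper: Taylor expansion of $\phi(\cdot,s)$ about $s$ with integral remainder, the identity for the third derivative of the transition matrix (your ordering $\ddot A + 2\dot A A + A\dot A + A^3$ is in fact the correct one; the paper states the transposed variant, which yields the same norm bound), and the exponential estimate \ref{e:ExpEstimates}. The only difference is that you carry out the remainder integral explicitly, whereas the paper leaves that computation to the reader.
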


\begin{proof}
The map $\theta$ is clearly monotonically increasing, and
$\theta(h) = O(h^3)$ as $h \to 0$, which implies
\ref{i:ProblemStatement:ApproxPHI:LocalErrorOfOrder2}.
Given $s \in \intcc{t_0,t_f}$, Taylor's formula for
$\phi(\cdot,s)$ about the point $s$ reads
$
\phi(t,s)
=
\widetilde{\phi}(t,s)
+
\frac{1}{2}(t-s)^3 \int_0^1 (1-z)^2 D_1^3 \phi(s + z(t-s),s) dz
$.
Using the identity
$
D_1^3 \phi(t,s)
=
(\ddot A(t) + 2 A(t) \dot A(t) + \dot A(t) A(t) + A(t)^3) \phi(t,s)
$
as well as the estimate \ref{e:ExpEstimates} we obtain the estimate
\ref{i:ProblemStatement:ApproxPHI:LocalError}.
\end{proof}

\bibliographystyle{GReissigIEEEtran}
\bibliography{GReissigBSTCTL,GReissigPreambles,mrabbrev,GReissigStrings,GReissigFremde,GReissigFremdeTMP,GReissigCONF,GReissigJOURNALS,GReissigPATENT,GReissigREPORTS,GReissigTALKS,GReissigTHESES,referencesstudents}
\ifDraftVersion
\IfFileExists{./ReplyToReviewers.tex}{%
\ifCLASSOPTIONtwocolumn\noindent\onecolumn\large\fi
\clearpage
\include{./ReplyToReviewers}
}{}%
\fi
\end{document}